\newcounter{generalnumbering} \numberwithin{generalnumbering}{section}
\theoremstyle{plain}		\newtheorem{theorem}[generalnumbering]{Theorem}
\theoremstyle{plain}		\newtheorem{corollary}[generalnumbering]{Corollary}
\theoremstyle{definition}		\newtheorem{definition}[generalnumbering]{Definition}
\theoremstyle{definition}		\newtheorem{example}[generalnumbering]{Example}
\theoremstyle{plain}		\newtheorem{proposition}[generalnumbering]{Proposition}
\theoremstyle{plain}		\newtheorem{lemma}[generalnumbering]{Lemma}
\newenvironment{remark}
{\vspace{\topsep}\noindent\textbf{Remark.}}
{\vspace{\topsep}}
\author{Luiz Cordeiro}
\address{Department of Mathematics and Statistics,
University of Ottawa,
585 King Edward Ave.,
Ottawa, ON K1N 6N5,
Canada}
\email{lcord081@uottawa.ca}
\title{An elementary approach to sofic groupoids}
\subjclass[2010]{Primary 37A15; Secondary 28D15, 47D03}
\title{An elementary approach to sofic groupoids}
\begin{document}

\begin{abstract}
We describe sofic groupoids in elementary terms and prove several permanence properties for soficity. We show that soficity can be determined in terms of the full group alone, answering a question by Conley, Kechris and Tucker-Drob.

\textbf{Keywords:} Groupoids, sofic, ultraproducts, full groups.
\end{abstract}

\maketitle

\section*{Introduction}

Sofic groups were first considered by Gromov \cite{MR1694588} in his work on Symbolic Dynamics (originally under the nable ``\emph{initially subamenable groups}``), and in 2010, Elek and Lippner \cite{eleklippner2010} introduced soficity for equivalence relations. Since then, the classes of sofic groups and equivalence relations have been shown to satisfy several important conjectures,see for example \cite{MR3408561,eleklippner2010,MR2089244,elekszabo2005}.

The original definitions of soficity are graph-theoretical, but alternative definitions by Ozawa \cite{ozawasoficnotes} and P\v{a}unescu \cite{paunescu2011} describe soficity at the level of the so-called full semigroup of $R$, or in terms of the action of the full group on the measure algebra, which can be immediately generalized to groupoids. We will describe general elementary techniques to deal with (abstract) sofic groupoids.


\section{Groupoids}

A \emph{groupoid} is a small category with inverses. More precisely, it consists of a set $G$ together with a partially defined binary operation $G^{(2)}\to G$, $(g,h)\mapsto gh$, where $G^{(2)}\subseteq G\times G$, called \emph{product}, satisfying
\begin{enumerate}
\item[(1)] If $(g,h),(h,k)\in G^{(2)}$ then $(gh,k),(g,hk)\in G^{(2)}$ and $g(hk)=(gh)k$;
\item[(2)] For all $g\in G$, there exists $g'\in G$ such that $(g,g'),(g',g)\in G^{(2)}$, and if $(g,h),(k,g)\in G^{(2)}$ then $g'(gh)=h$ and $(kg)g'=k$.
\end{enumerate}
Given $g_1,g_2,\ldots,g_n$ such that $(g_i,g_{i+1})\in G^{(2)}$, the product $g_1\cdots g_n$ is uniquely determined by (1), and the element $g'$ in (2) is unique -- we denote it $g^{-1}$ and call it the \emph{inverse} of $g$. The \emph{source} and \emph{range} of $g\in G$ are $s(g)=g^{-1}g$ and $r(g)=gg^{-1}$, respectively. The \emph{unit space} of $G$ is $G^{(0)}=s(G)=r(G)$. We then obtain $G^{(2)}=\left\{(g,h)\in G\times G:s(g)=r(h)\right\}$.


A \emph{discrete measurable groupoid} is a groupoid $G$ with a standard Borel structure for which the product and inverse maps are Borel and $s^{-1}(x)$ is countable for all $x\in G^{(0)}$. In this case the source and range maps are also Borel, and $G^{(2)}$ and $G^{(0)}=\left\{x\in G:x=s(x)\right\}$ are Borel subsets of $G$.

The \emph{Borel full semigroup} of a discrete measurable groupoid $G$ is the set $[[G]]_B$ of Borel subsets $\alpha\subseteq G$ such that the restrictions $s|_\alpha$ and $r|_\alpha$ of the source and range maps are injections, and hence Borel isomorphisms onto their respective images \cite[Theorem 4.12.4]{srivastavaacourseonborelsets}. Moreover, a simple application of the Lusin-Novikov Theorem \cite[Theorem 5.10.3]{srivastavaacourseonborelsets} implies that $G$ can be covered by countably many elements of $[[G]]_B$.

$[[G]]_B$ is an inverse monoid\footnote{An \emph{inverse monoid} is a set $M$ with an associative binary operation $(x,y)\mapsto xy$, which has a neutral element $1$ and such that for each element $g\in M$ there is an unique element $h\in M$ satisfying $g=ghg$ and $h=hgh$, called the \emph{inverse} of $g$ and denoted $h=g^{-1}$.} with the natural product and inverse of sets, namely
\[\alpha\beta=\left\{ab:(a,b)\in (\alpha\times\beta)\cap G^{(2)}\right\},\qquad \alpha^{-1}=\left\{a^{-1}:a\in\alpha\right\}\]
and $G^{(0)}$ is the unit of $[[G]]_B$, which we may instead denote by $G^{(0)}=1$ or $1_G$. Moreover, $[[G]]_B$ is closed below, i.e., if $\beta\subseteq\alpha$ and $\alpha\in[[G]]_B$ then $\beta\in[[G]]_B$.

A \emph{probability measure-preserving} (\emph{pmp}) groupoid is a discrete measurable grou\-poid $G$ with a Borel measure $\mu$ on $G^{(0)}$ satisfying $\mu(s(\alpha))=\mu(r(\alpha))$ for all $\alpha\in[[G]]_B$. 
We write $(G,\mu)$ for a pmp groupoid when we need the measure $\mu$ to be explicit.

The measure $\mu$ induces a pseudometric $d_{\mu}$ on $[[G]]_B$ via
\[d_{\mu}(\alpha,\beta)=\mu(s(\alpha\triangle\beta))=\mu(r(\alpha\triangle\beta)).\]

The \emph{trace} of $\alpha\in[[G]]_B$ is defined as $\operatorname{tr}(\alpha)=\mu(\alpha\cap G^{(0)})$. For us, it will be easier to deal with the trace instead of the metric above, which is allowed by Proposition \ref{theoremtraceanddistance} below.

The following properties of $d_\mu$ are useful, and we leave the proof to the interested reader.
\begin{proposition}\label{propositionmetric}
Given $\alpha,\beta,\gamma,\delta\in [[G]]_B$;
\begin{enumerate}
\item $d_{\mu}(\alpha,\beta)=d_{\mu}(\alpha^{-1},\beta^{-1})$.
\item $d_{\mu}(\alpha\beta,\gamma\delta)\leq d_{\mu}(\alpha,\gamma)+d_\mu(\beta,\delta)$;
\item $d_{\mu}(\alpha,\beta^{-1})\leq d_{\mu}(\alpha,\alpha\beta\alpha)+d_{\mu}(\beta,\beta\alpha\beta)$;
\end{enumerate}
\end{proposition}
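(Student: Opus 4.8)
The plan is to verify each of the three inequalities by reducing everything to the source (or range) map and working with symmetric differences of subsets of the unit space, where $d_\mu$ becomes the ordinary measure of a symmetric difference.

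For part (1), I would observe that the inversion map $\alpha\mapsto\alpha^{-1}$ swaps the roles of source and range: since $s(\gamma^{-1})=r(\gamma)$ for any $\gamma\in[[G]]_B$, and inversion commutes with symmetric difference (i.e. $(\alpha\triangle\beta)^{-1}=\alpha^{-1}\triangle\beta^{-1}$, because inversion is a bijection $G\to G$), we get $s(\alpha^{-1}\triangle\beta^{-1})=s((\alpha\triangle\beta)^{-1})=r(\alpha\triangle\beta)$. The two expressions for $d_\mu$ in its definition then give $d_\mu(\alpha^{-1},\beta^{-1})=\mu(s(\alpha^{-1}\triangle\beta^{-1}))=\mu(r(\alpha\triangle\beta))=d_\mu(\alpha,\beta)$.

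For part (2), I would use the triangle inequality for the pseudometric together with the fact that left and right translation by a fixed element are $d_\mu$-contractions. Concretely, I expect to prove two auxiliary estimates, $d_\mu(\alpha\beta,\alpha\delta)\le d_\mu(\beta,\delta)$ and $d_\mu(\alpha\beta,\gamma\beta)\le d_\mu(\alpha,\gamma)$, and then combine them via $d_\mu(\alpha\beta,\gamma\delta)\le d_\mu(\alpha\beta,\alpha\delta)+d_\mu(\alpha\delta,\gamma\delta)$. The contraction estimates reduce to set-theoretic containments like $s(\alpha\beta\triangle\alpha\delta)\subseteq s(\beta\triangle\delta)$, which hold because composing with $\alpha$ on the left is, on the relevant source-fibers, a partial injection; the measure-preserving hypothesis $\mu(s(\cdot))=\mu(r(\cdot))$ controls the measures after translation. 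The main care here is that products of elements of $[[G]]_B$ only see pairs that are composable, so the symmetric differences must be handled as subsets of $G$ and then pushed to $G^{(0)}$; I expect the bookkeeping of composability to be the most delicate routine point, though no single step is deep.

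For part (3), which is the least obvious, I would first rewrite the quantities on the right-hand side using the contraction and triangle estimates from part (2) to peel off factors, aiming to bound $d_\mu(\alpha,\beta^{-1})$ by the failure of $\alpha$ and $\beta$ to be genuine inverses of one another. The key idea is that $d_\mu(\alpha,\alpha\beta\alpha)$ measures how far $\beta\alpha$ (or $\alpha\beta$) is from acting as the identity where $\alpha$ is defined, and similarly $d_\mu(\beta,\beta\alpha\beta)$ measures the symmetric condition; when both vanish, $\alpha$ and $\beta$ satisfy the inverse-monoid relations $\alpha=\alpha\beta\alpha$ and $\beta=\beta\alpha\beta$, forcing $\beta=\alpha^{-1}$ by uniqueness of inverses in $[[G]]_B$. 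I anticipate the hard part will be turning this qualitative statement into the stated additive bound with no extra constants: I would introduce the idempotents $s(\alpha)$ and $s(\beta)$, use that $\alpha^{-1}=\alpha^{-1}\alpha\alpha^{-1}$, and apply part (2) repeatedly to the telescoping chain from $\alpha$ through $\alpha\beta\alpha$ to $\beta^{-1}$, keeping each comparison between elements that differ by a single insertion of $\alpha\beta$ or $\beta\alpha$ so that each term is controlled by one of the two summands.
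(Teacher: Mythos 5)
The paper itself gives no argument for this proposition (the proof is explicitly left to the reader), so this review assesses whether your argument closes on its own. Part (2) does: the two contraction estimates $d_\mu(\alpha\beta,\alpha\delta)\le d_\mu(\beta,\delta)$ and $d_\mu(\alpha\delta,\gamma\delta)\le d_\mu(\alpha,\gamma)$, chained by the triangle inequality, give (2), and you correctly note that the right-hand contraction (unlike the left-hand one) genuinely needs the pmp hypothesis.

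Part (1), however, is circular as written. Your own reduction shows $d_\mu(\alpha^{-1},\beta^{-1})=\mu(r(\alpha\triangle\beta))$, so statement (1) is \emph{equivalent} to the identity $\mu(s(\alpha\triangle\beta))=\mu(r(\alpha\triangle\beta))$ --- which is exactly what you then quote from the displayed definition. That identity is not free: the pmp axiom gives $\mu(s(\gamma))=\mu(r(\gamma))$ only for $\gamma\in[[G]]_B$, and $\alpha\triangle\beta$ is in general \emph{not} in $[[G]]_B$, since neither $s$ nor $r$ need be injective on it. Indeed, read literally the identity fails: on the full relation on $X=\{1,2\}$ with uniform measure, in the convention of Example \ref{exampleequivalencerelations}, take $\alpha=\{(1,1)\}$ and $\beta=\{(1,2)\}$; then $s(\alpha\triangle\beta)=X$ has measure $1$ while $r(\alpha\triangle\beta)=\{1\}$ has measure $1/2$. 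The proposition (all three parts) does hold, and the definition is coherent, under the additive reading $d_\mu(\alpha,\beta)=\mu(s(\alpha\setminus\beta))+\mu(s(\beta\setminus\alpha))$, i.e.\ the measure of $\alpha\triangle\beta$ with respect to the measure $\hat\mu$ on $G$ induced by $\mu$ (the one with $\hat\mu(\gamma)=\mu(s(\gamma))$ for $\gamma\in[[G]]_B$). Under that reading (1) has a genuine one-line proof: apply $\mu(s(\gamma))=\mu(r(\gamma))$ separately to the two pieces $\alpha\setminus\beta,\ \beta\setminus\alpha\in[[G]]_B$. The fact that your argument for (1) never invokes the pmp hypothesis is the visible symptom of the gap: inversion-invariance of $d_\mu$ is precisely where invariance of $\mu$ must be used.

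Part (3): the telescoping plan cannot close. Part (2) only ever \emph{contracts} distances under multiplication, so from the two hypotheses it lets you compare words obtained from one another by inserting $\alpha\beta$ or $\beta\alpha$ (for instance $\alpha$, $\alpha\beta\alpha$, or $\alpha\alpha^{-1}\beta^{-1}\alpha^{-1}\alpha$, using $\alpha=\alpha\alpha^{-1}\alpha$ together with (1)); but $\beta^{-1}$ is not such a word, and every bridge from this family to $\beta^{-1}$ via (2) reintroduces the quantity $d_\mu(\alpha,\beta^{-1})$ being bounded. What is needed is the quantitative form of the ``uniqueness of inverses'' you allude to, and it is a set-level fact special to $[[G]]_B$: since $s$ and $r$ are injective on $\alpha$ and $\beta$,
\[\alpha\cap\alpha\beta\alpha=\alpha\cap\beta^{-1},\qquad \beta\cap\beta\alpha\beta=\beta\cap\alpha^{-1}.\]
(If $g\in\alpha$ equals $a_1ba_2$ with $a_i\in\alpha$, $b\in\beta$, then comparing sources and ranges forces $a_1=a_2=g$, and then $g=gbg$ gives $b=g^{-1}$; the reverse inclusions are immediate from $g=gg^{-1}g$.) Granting these, $\alpha\setminus\beta^{-1}=\alpha\setminus\alpha\beta\alpha$ and $\beta^{-1}\setminus\alpha=(\beta\setminus\beta\alpha\beta)^{-1}$, so with the additive, inversion-invariant reading of $d_\mu$ from part (1),
\[d_\mu(\alpha,\beta^{-1})=\hat\mu(\alpha\setminus\alpha\beta\alpha)+\hat\mu(\beta\setminus\beta\alpha\beta)\le d_\mu(\alpha,\alpha\beta\alpha)+d_\mu(\beta,\beta\alpha\beta).\]
So (3) amounts to one application of invariance plus one inverse-semigroup computation; it is not reachable by iterating (2), and this step is what your sketch is missing.
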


The (measured) \emph{full semigroup} of a pmp groupoid $(G,\mu)$ is the metric quotient $[[G]]$ (or $[[G]]_\mu$ to make $\mu$ explicit) of $[[G]]_B$ under the pseudometric $d_\mu$. The proposition above implies that $[[G]]$ is also an inverse monoid with the natural structure.

The \emph{Borel full group} $[G]_B$ of a discrete measurable groupoid $G$ is the set of those $\alpha\in[[G]]_B$ with $s(\alpha)=r(\alpha)=G^{(0)}$. If $(G,\mu)$ is a pmp groupoid, the image of $[G]_B$ in $[[G]]$ is called the (measured) \emph{full group} $G$ and is denoted $[G]$ or $[G]_\mu$.

We will not make a distinction between measured and Borel full semigroups and groups, except when necessary.

\begin{example}
Let $\Gamma$ be a countable group acting on a probability space $(X,\mu)$ by measure-preserving automorphisms. The \emph{transformation groupoid} $G=\Gamma\ltimes X$ is defined as $\Gamma\times X$ with product
\[(h,gx)(g,x)=(hg,x),\quad x\in X,\quad g,h\in\Gamma\qquad\qquad
\begin{tikzcd}[row sep=tiny]
x\arrow[r,"g"]\arrow[rr,bend right,"hg",swap]&gx\arrow[r,"h"]&hgx
\end{tikzcd}\]
In this case, the unit space $G^{(0)}$ can be identified with $X$.

\textbf{Subexample 1:} If $X=\left\{*\right\}$ is a singleton we retrieve the group $\Gamma$, in which case $[G]=\Gamma$ and $[[G]]$ consists of $\Gamma$ and a zero (absorbing) element.

\textbf{Subexample 2:} If $\Gamma=1$ is a trivial group we retrieve $X$, $[[G]]$ is the measure algebra of $X$ and $[G]$ is the trivial group.

\end{example}

\begin{example}\label{exampleequivalencerelations}
A measure-preserving equivalence relation $R$ on a probability space $(X,\mu)$ can be regarded as a groupoid with operation $(x,y)(y,z)=(x,z)$ whenever $(x,y),(y,z)\in R$. We can identify the unit space $R^{(0)}$ as $X$ and $[[R]]$ as the set of all partial Borel automorphisms $f:\operatorname{dom}(f)\to\operatorname{ran}(f)$, where $\operatorname{dom}(f),\operatorname{ran}(f)\subseteq X$, such that $\operatorname{graph}(f)\subseteq R$. Namely, to each such $f$ we associate the element $\left\{(f(x),x):x\in\operatorname{dom}(f)\right\}$ of $[[R]]$. The product becomes the usual composition of partial maps, and the trace becomes $\operatorname{tr}(f)=\mu\left\{x:f(x)=x\right\}$.
\end{example}

If $\left\{(G_n,\mu_n)\right\}_n$ is a family of pmp groupoids and $t_n$ are non-negative numbers such that $\sum_n t_n=1$, we construct the \emph{convex combination groupoid} $G=\sum t_n G_n$ as follows: $G$ is the disjoint union of all $G_n$, $G^{(2)}$ is the disjoint union of $G_n^{(2)}$, the product on $G$ restricts to the product on each $G_n$, and the measure $\mu$ on $G$ is given by $\mu(A)=\sum_n t_n\mu_n(A\cap G_n)$.

\subsubsection*{Finite groupoids}

Every finite groupoid is a convex combination of groupoids of the form $G=\Gamma\times Y^2$, where $\Gamma$ is a (finite) group, $Y$ is a (finite) set and $Y^2$ is the largest equivalence relation on $Y$. These are the \emph{connected} finite groupoids. We see both $\Gamma$ and $Y^2$ as groupoids on their own right, and the product has the obvious groupoid structure. In this case, $G^{(0)}=Y$, and the only probability measure on $Y$ which makes $G$ pmp is the normalized counting measure, $\mu_\#(A)=|A|/|Y|$.

We will analyse the full semigroup $[[Y^2]]$ as in Example \ref{exampleequivalencerelations}.

\begin{proposition}\label{propositionfinitegroupoids}
\begin{enumerate}
\item[(a)] If $G$ is a connected finite pmp groupoid, then there exists a finite set $Y$ and an isometric embedding $\pi:[[G]]\to[[Y^2]]$.
\item[(b)] If $G$ is a finite pmp groupoid and $\epsilon>0$, then there exists a finite set $Y$ and a map $\pi:[[G]]\to[[Y^2]]$ such that $d(\pi(\alpha\beta),\pi(\alpha)\pi(\beta))<\epsilon$ and $|\operatorname{tr}(\alpha)-\operatorname{tr}(\pi(\alpha))|<\epsilon$ for all $\alpha\in[[G]]$.
\end{enumerate}
\end{proposition}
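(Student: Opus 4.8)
The plan is to handle the two parts separately and derive (b) from (a). For part (a), I would first make the structure of $[[G]]$ completely explicit. Writing $G=\Gamma\times Y_0^2$ for the connected finite groupoid, an element of $G$ has the form $(\gamma,a,b)$ with source $b$ and range $a$, and the product reads $(\gamma,a,b)(\delta,b,c)=(\gamma\delta,a,c)$. Consequently an element $\alpha\in[[G]]$ is precisely a partial bijection $\phi\colon D\to R$ of $Y_0$ together with a labelling $b\mapsto\gamma_b\in\Gamma$ on its domain $D$; the product is composition of labelled partial bijections, and $\operatorname{tr}(\alpha)=|\{b\in D:\phi(b)=b,\ \gamma_b=e\}|/|Y_0|$. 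Note that on a finite groupoid with full‑support counting measure $d_\mu$ is already a genuine metric, so $[[G]]_B=[[G]]$ and no quotient is needed.

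Then I would linearise by the (left) regular representation: set $Y=\Gamma\times Y_0$ with its normalized counting measure and define $\pi(\alpha)\in[[Y^2]]$ to be the partial bijection
\[
\pi(\alpha)\colon(\eta,b)\longmapsto(\gamma_b\eta,\phi(b)),\qquad b\in D,\ \eta\in\Gamma .
\]
Injectivity of this map (so that $\pi(\alpha)\in[[Y^2]]$) is immediate from injectivity of $\phi$ and left‑cancellation in $\Gamma$. The verifications are then routine, carried out in this order: (i) $\pi$ is multiplicative, because applying $\pi(\beta)$ then $\pi(\alpha)$ produces the label $\gamma_{\psi(c)}\delta_c$ and the map $\phi\circ\psi$, which is exactly the data of $\alpha\beta$; (ii) $\pi$ preserves the trace, since the fixed points of $\pi(\alpha)$ are $\{(\eta,b):\phi(b)=b,\ \gamma_b=e\}$, of size $|\Gamma|\cdot|\{b:\phi(b)=b,\gamma_b=e\}|$, while $|Y|=|\Gamma|\,|Y_0|$; and (iii) $\pi$ is isometric, because $\pi(\alpha)$ and $\pi(\beta)$ disagree exactly on $\Gamma\times B$, where $B\subseteq Y_0$ is the disagreement set computing $d(\alpha,\beta)$, and the factor $|\Gamma|$ cancels against $|Y|/|Y_0|$. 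An isometry of a finite metric space is injective, giving the embedding. I would record that $\pi$ is in fact a trace‑preserving, unital inverse‑monoid embedding, since this is what part (b) uses.

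For part (b) I would use that a finite pmp groupoid is a convex combination $G=\sum_i t_iG_i$ of finitely many connected pieces $G_i$, and that $[[G]]$ decomposes accordingly: writing $\alpha_i=\alpha\cap G_i$ gives an isomorphism $[[G]]\cong\prod_i[[G_i]]$ under which $\operatorname{tr}(\alpha)=\sum_i t_i\operatorname{tr}_i(\alpha_i)$ and $d(\alpha,\beta)=\sum_i t_i d_i(\alpha_i,\beta_i)$. Let $\pi_i\colon[[G_i]]\to[[(Y_i')^2]]$ be the embeddings from (a), and put $w_i=|Y_i'|$. I would take $Y$ to be a disjoint union $Y=\bigsqcup_i\bigl(Y_i'\times\{1,\dots,m_i\}\bigr)$ of $m_i$ copies of $Y_i'$, and let $\pi(\alpha)$ act on the $i$‑th block as $\pi_i(\alpha_i)\times\operatorname{id}$. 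Because each $\pi_i$ is multiplicative and the blocks are $\pi$‑invariant, $\pi$ is \emph{exactly} multiplicative, so the product estimate holds with error $0$; and $\operatorname{tr}(\pi(\alpha))=\sum_i\frac{m_iw_i}{N}\operatorname{tr}_i(\alpha_i)$ with $N=\sum_i m_iw_i=|Y|$, using trace‑preservation of each $\pi_i$.

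The only genuine work, and the step I expect to be the main obstacle, is choosing the multiplicities $m_i$ so that the \emph{relative} block sizes $m_iw_i/N$ approximate the convex weights $t_i$: comparing the two trace formulas gives $|\operatorname{tr}(\pi(\alpha))-\operatorname{tr}(\alpha)|\le\sum_i|m_iw_i/N-t_i|$ since $|\operatorname{tr}_i(\alpha_i)|\le 1$, so it suffices to make $\sum_i|m_iw_i/N-t_i|<\epsilon$. This is a finite simultaneous Diophantine approximation: taking $m_i=\lfloor Lt_i/w_i\rfloor$ for a large integer $L$ (and $m_i=0$ when $t_i=0$) yields $Lt_i-w_i<m_iw_i\le Lt_i$, hence $L-W<N\le L$ with $W=\sum_i w_i$, and a short estimate bounds each $|m_iw_i/N-t_i|$ by $O(W/L)$, so the finite total is $<\epsilon$ once $L$ is large enough. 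This makes $\pi$ the desired approximate embedding and completes the proof.
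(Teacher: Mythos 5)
Your proof is correct. Part (a) is exactly the paper's construction: the paper also sends $(g,(y,x))\in\Gamma\times Y_0^2$ to the partial bijection $(h,x)\mapsto(gh,y)$ of $\Gamma\times Y_0$, i.e.\ the left regular representation, so there is nothing to compare there. In part (b) you diverge from the paper in how the convex combination is realized. The paper pads each connected block to a \emph{common} size: for $G=tH+(1-t)K$ with $t=p/q$ rational, it embeds $[[G]]$ into $[[([q]\times X\times Y)^2]]$, letting $\pi_H$ act on the $X$ coordinate in the first $p$ sheets and $\pi_K$ act on the $Y$ coordinate in the remaining $q-p$ sheets; since all $q$ sheets have the same cardinality $|X|\,|Y|$, the relative weights come out exactly $p/q$ and $1-p/q$, so the map is genuinely isometric for rational weights, and irrational weights are then handled by perturbing the coefficients to rationals. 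You instead keep blocks of heterogeneous sizes $w_i$ and choose integer multiplicities $m_i=\lfloor Lt_i/w_i\rfloor$, solving a simultaneous Diophantine approximation so that $\sum_i|m_iw_i/N-t_i|$ is small. Both routes work; the paper's padding trick buys an exact isometric embedding whenever the weights are rational (stronger than what (b) asserts), whereas your route is uniform: no rational/irrational case split, the error is confined entirely to the trace (your map is exactly multiplicative), and the final bound is explicit, avoiding the unstated continuity argument the paper needs when it replaces $\mu$ by a nearby rational convex combination. You are also right to record that the embedding in (a) is multiplicative and unital rather than merely isometric; both your argument and the paper's rely on this, though the statement of (a) does not claim it.
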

\begin{proof}
\begin{enumerate}
\item[(a)] Suppose $G=\Gamma\times Y^2$. For every $(g,(y,x))\in G$, set $\pi(g,(y,x))\in[[(H\times Y)^2]]$ by $\operatorname{dom}(\pi(g,(y,x)))=H\times\{x\}$ and $\pi(g,(y,x))(h,x)=(gh,y)$. Then $\pi:[[G]]\to[[(H\times Y)^2]]$, $\pi(\alpha)=\bigcup_{g\in\alpha}\pi(g)$ has the desired properties.
\item[(b)] Suppose $G=tH+(1-t)K$, where $H,K$ are connected finite pmp groupoids, and suppose further that $t$ is rational, say $t=p/q$, $p,q\in\mathbb{N}$. Take finite sets $X,Y$ and isometric embeddings $\pi_H:[[H]]\to[[X^2]]$ and $\pi_K:[[K]]\to[[Y^2]]$. Let $[q]=\left\{0,1,\ldots,q-1\right\}$ be a finite set with $q$ elements, and set $\pi:[[G]]\to[[\left([q]\times X\times Y\right)^2]]$ by
\[\pi(\alpha)(j,x,y)=\begin{cases}(j,\pi_H(\alpha\cap H)x,y)&\text{ if }j\leq p-1\\
(j,x,\pi_K(\alpha\cap K)y)&\text{ if }p\leq j\leq q-1\end{cases}\]
(and the domain of $\pi(\alpha)$ consists of all $(j,x,y)$ for which we can apply the definition above.) Then $\pi$ is an isometric embedding.

We can iterate the argument above to finite rational convex combinations. In the non-rational case, we approximate the coefficients in the convex combination by rational numbers and obtain approximate embeddings as in the proposition.\qedhere
\end{enumerate}
\end{proof}

\subsubsection*{Ultraproducts}
The language of metric ultraproducts is useful for soficity, and we'll describe them briefly here. We refer to \cite{pk12} and \cite{MR3408561} for the details. Let $(M_k,d_k)$ be a sequence of metric spaces of diameter $\leq 1$, and $\mathcal{U}$ a free ultrafilter on $\mathbb{N}$. The \emph{metric ultraproduct} of $(M_n,d_n)$ along $\mathcal{U}$ is the metric quotient of $\prod_k M_k$ under the pseudometric $d_\mathcal{U}((x_k),(y_k))=\lim_{k\to\mathcal{U}}d_k(x_k,y_k)$, and we denote it $\prod_{\mathcal{U}}M_k$. We denote the class of a sequence $(x_k)_k\in\prod_k M_k$ by $(x_k)_\mathcal{U}$.


If $(G_k,\mu_k)$ is a family of pmp groupoids, the trace on $\prod_{\mathcal{U}}[[G_k]]$ is given by
\[\operatorname{tr}(g_k)_\mathcal{U}=\lim_{k\to\mathcal{U}}\operatorname{tr}(g_k).\]
Moreover, by \ref{propositionmetric} $\prod_{\mathcal{U}}[[G_k]]$ is an inverse monoid with respect to the canonical product, namely $(g_k)_\mathcal{U}(h_k)_{\mathcal{U}}=(g_kh_k)_{\mathcal{U}}$.

\begin{remark}
One can avoid ultraproducts when dealing with sofic groupoids as follows: For every $n\in\mathbb{N}$, let $Y_n=\left\{0,\ldots,n-1\right\}$ be a finite set with $n$ elements, and denote $[[n]]=[[Y_n^2]]$. Consider the product space $\prod[[n]]$ endowed with the supremum metric and define an equivalence relation $\sim$ on $\prod[[n]]$ by setting
\[(x_n)\sim (y_n)\qquad\text{iff}\qquad\lim_{n\to\infty}d_\#(x_n,y_n)=0.\]
Denote by $\prod^{\ell^\infty/c_0}[[n]]=\prod[[n]]/\sim$ the quotient. Proposition \ref{propositionmetric} also implies that $\prod^{\ell^\infty/c_0}[[n]]$ is an inverse monoid with the obvious operations.

We can naturally embed $[[n]]$ into $[[n+1]]$ (because $Y_n\subseteq Y_{n+1}$), and this modifies the metric by at most $\frac{1}{n+1}$. Also, $[[n]]$ embeds isometrically into $[[kn]]$ as follows: Given $\alpha\in[[n]]$, set $\pi(\alpha)\in[[kn]]$ as $\pi(\alpha)(qn+j)=qn+\alpha(j)$, whenever $0\leq q\leq k-1$ and $j\in\operatorname{dom}(\alpha)$.

This way, we can embed $[[n]]$ into any $p\geq n$ as follows: if $p=qn+r$, with $0\leq r<n$, embed $[[n]]$ into $[[qn]]$ and then into $[[qn+1]],[[qn+2]],\ldots,[[qn+r]$. The metric changes by at most $\frac{1}{qn+r}+\cdots+\frac{1}{qn+1}\leq\frac{n}{qn}=\frac{n}{p-r}\leq\frac{n}{p-n}$, and this goes to $0$ as $p\to\infty$. With these embeddings and a couple of diagonal arguments, one easily proves the following:
\end{remark}

\begin{theorem}\label{theoremlinfinityc0}
A separable metric space (semigroup) $M$ embeds isometrically into $\prod_{\mathcal{U}} [[n_k]]$ (where $n_k$ is a sequence of natural numbers and $\mathcal{U}$ is an ultrafilter on $\mathbb{N}$) if and only if $M$ embeds into $\prod^{\ell^\infty/c_0}[[n]]$.
\end{theorem}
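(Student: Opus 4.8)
The plan is to treat both conditions as statements about the countable data of $M$ and to move between the ``all indices modulo $c_0$'' regime of $\prod^{\ell^\infty/c_0}[[n]]$ and the ``subsequence along $\mathcal U$'' regime of $\prod_\mathcal U[[n_k]]$ using the amplification and interpolation embeddings $[[n]]\hookrightarrow[[p]]$ built just before the statement. Fix a countable dense set $\{m_i\}_{i}\subseteq M$; since every map below will be isometric on this set and the target spaces are complete, it suffices to construct the embeddings on $\{m_i\}$ and extend by uniform continuity. For each $i$ I would choose a representing sequence and record the doubly indexed distances $a^{(i,j)}_n=d_\#\big(x^{(i)}_n,x^{(j)}_n\big)$; the content of each direction is then to realize the numbers $d_M(m_i,m_j)$ as genuine limits of such distances.

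For the implication from $\prod_\mathcal U[[n_k]]$ to $\prod^{\ell^\infty/c_0}[[n]]$ I would start from an isometric embedding into the ultraproduct and use the finite intersection property of $\mathcal U$ in a diagonal argument: choose $k_1<k_2<\cdots$ so that for all $i,j\le l$ one has $\big|a^{(i,j)}_{k_l}-d_M(m_i,m_j)\big|<1/l$, so that along $(k_l)_l$ every one of the countably many distances converges to the correct value. Writing $N_l=n_{k_l}$ (which we may assume strictly increasing, the bounded case being trivial), I would then spread this convergent subsequence over all indices: for each $p$ pick the largest $l$ with $N_l\le\sqrt p$ and let the $p$-th coordinate be the image of $x^{(i)}_{k_l}$ under the interpolation embedding $[[N_l]]\hookrightarrow[[p]]$. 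Since that embedding distorts $d_\#$ by at most $N_l/(p-N_l)\le\sqrt p/(p-\sqrt p)\to0$ while $N_l\to\infty$, the resulting sequence has pairwise distances converging to $d_M(m_i,m_j)$; its class in $\prod^{\ell^\infty/c_0}[[n]]$ therefore realizes the metric, giving the desired isometric embedding.

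The converse is where the real work lies, and is the step I expect to be the main obstacle: a class in $\prod^{\ell^\infty/c_0}[[n]]$ only controls the asymptotic ($c_0$-modulo) behaviour of its representatives, so to feed it into an ultraproduct I must first replace the given representatives by ones whose pairwise distances honestly converge (not merely have the prescribed asymptotics), and do so simultaneously for the countably many pairs. The plan is again a diagonal construction: at each finite stage I would realize the finite configuration $\{m_1,\dots,m_l\}$, up to error $1/l$, as an honest configuration inside a single $[[N_l]]$—using the amplification $[[n]]\hookrightarrow[[kn]]$ to gain room and the interpolation embeddings to pass between scales—and then stitch these stages into one sequence indexed by $k\mapsto N_k$. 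Once the representatives have convergent distances, the ultralimit along any free ultrafilter $\mathcal U$ equals the ordinary limit, so $m_i\mapsto\big(y^{(i)}_k\big)_\mathcal U\in\prod_\mathcal U[[N_k]]$ is isometric. Securing the stage-wise honest realizations while keeping the stages coherent is exactly the delicate point, and is what the two diagonal arguments promised in the Remark are there to handle.

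Finally, for the semigroup version I would run the same constructions but track products: by Proposition \ref{propositionmetric} the amplification and interpolation maps are multiplicative up to an error that vanishes with the distortion, so in both the $c_0$-quotient and the ultraproduct the induced maps pass to honest inverse-monoid homomorphisms, and the isometric embeddings above are automatically embeddings of semigroups.
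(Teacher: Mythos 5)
Your first direction (from the ultraproduct into $\prod^{\ell^\infty/c_0}[[n]]$) is correct, and it is essentially the whole of what the paper has in mind: the paper offers nothing beyond the interpolation embeddings and the phrase ``a couple of diagonal arguments'', and your implementation --- extract $k_1<k_2<\cdots$ along which all distances among $\{m_1,\dots,m_l\}$ are within $1/l$ of their true values, then spread $x^{(i)}_{k_l}$ over all coordinates $p$ via the embedding $[[N_l]]\hookrightarrow[[p]]$ with $N_l\le\sqrt p$ --- is a correct way to carry it out: the resulting representatives have honestly convergent pairwise distances, hence realize $d_M$ in the quotient, and one extends from the dense set by completeness.

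The genuine gap is in the converse, at exactly the step you flag as delicate: realizing $\{m_1,\dots,m_l\}$ up to $1/l$ as an honest configuration in a single $[[N_l]]$. This step is not merely delicate; it is impossible in general, and consequently the implication you are trying to prove is false for general separable metric spaces under the natural reading of the statement. The reason is that the quotient metric on $\prod^{\ell^\infty/c_0}[[n]]$ is a limsup, $d\bigl([(x_n)],[(y_n)]\bigr)=\limsup_n d_\#(x_n,y_n)$, so different pairs of points may attain their distances along disjoint sets of coordinates, producing distance vectors that no honest finite configuration (hence no configuration in any ultraproduct) can have. Concretely, take $M=\{m_1,m_2,m_3,m_4\}$ with $d(m_i,m_j)=1$ for $i<j\le3$ and $d(m_i,m_4)=\tfrac12$ for $i\le3$. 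For even $n$ realize in $[[n]]$ (up to $O(1/n)$) the quadruple $\alpha_1=\alpha_2=\mathrm{id}$, $\alpha_3$ fixed-point free and preserving the two halves of $Y_n$, $\alpha_4$ equal to $\mathrm{id}$ on one half and to $\alpha_3$ on the other; for odd $n$ do the same with the roles of the indices $2$ and $3$ exchanged. Every pairwise limsup then equals the prescribed distance, so $M$ embeds isometrically into $\prod^{\ell^\infty/c_0}[[n]]$. But $M$ embeds into no $\prod_{\mathcal U}[[n_k]]$ at all: if $\gamma_1,\dots,\gamma_4$ were images with representatives $\gamma_i^{(k)}$, the density of the set where $\gamma_i^{(k)}$ and $\gamma_4^{(k)}$ agree tends to $\tfrac12$ along $\mathcal U$ for $i=1,2,3$, while the pairwise intersections of these three sets lie in the agreement sets of $\gamma_i^{(k)},\gamma_j^{(k)}$ ($i<j\le3$), whose densities tend to $0$; so their union would eventually have density at least $\tfrac32-o(1)>1$, which is absurd. (In the language of your finite stages: distance vectors realizable in some $[[N]]$ are averages of partition vectors, and the coordinatewise maxima that limsups create can leave that convex set --- they can even violate the hypermetric inequalities.)

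So no diagonal argument can close your gap, and the failure is not yours: with the limsup metric on the quotient, the right-to-left implication fails in the stated metric-space generality. What is true --- and suffices for everything the paper uses the theorem for, such as independence of the free ultrafilter and of the sequence $(n_k)$ --- is the version in which ``$M$ embeds into $\prod^{\ell^\infty/c_0}[[n]]$'' means that there is a lift $m\mapsto(\phi_n(m))_n\in\prod[[n]]$ with $\lim_n d_\#(\phi_n(x),\phi_n(y))=d_M(x,y)$ for all $x,y$. Under that reading your hard direction becomes trivial (an ultralimit of a convergent sequence is its limit), and the entire content of the theorem is your first direction, together with your closing remarks on multiplicativity for the semigroup case. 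I would recommend proving and stating the theorem in that form.
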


In particular, the choice of free ultrafilter $\mathcal{U}$ or sequence $(n_k)$ does not matter for the existence of an embedding into $\prod_{\mathcal{U}}[[n_k]]$.

We will be dealing with ultraproducts of full semigroups, and most natural operations extend to ultraproducts. For example, if $\alpha,\beta\in[[G]]$ are such that $\beta^{-1}\alpha$ and $\beta\alpha^{-1}$ are idempotents, then $\alpha\cup\beta\in[[G]]$. We can consider similar unions in ultraproducts under the same hypotheses. We won't make further reference to these facts during the remainder of the paper.

\section{Sofic groupoids}

We fix, for the remainder of this paper, a free ultrafilter $\mathcal{U}$ on $\mathbb{N}$.

\begin{definition}
A pmp groupoid $G$ is \emph{sofic} if there exists a sequence $\left\{G_k\right\}_k$ of finite pmp groupoids and an isometric embedding $\pi:[[G]]\to\prod_{\mathcal{U}}[[G_k]]$.
\end{definition}

Equivalently, a pmp groupoid $G$ is sofic if and only if for every $\epsilon>0$ and every finite subset $K$ of $[[G]]$ (or $[[G]]_B$, for that matter), there exist a finite groupoid $H$ and a map $\pi:[[G]]\to[[H]]$ such that $d(\pi(\alpha\beta),\pi(\alpha)\pi(\beta))<\epsilon$ and $|\operatorname{tr}(\alpha)-\operatorname{tr}(\pi(\alpha))|<\epsilon$ for all $\alpha,\beta\in K$. The map $\pi$ is called a $(K,\epsilon)$-almost morphism.

This definition differs from the usual one (see \cite{ozawasoficnotes} or \cite{MR3035288}) on the initial choice of finite models, but Proposition \ref{propositionfinitegroupoids} and Theorem \ref{theoremlinfinityc0} show that they are equivalent.

\begin{proposition}\label{theoremtraceanddistance}
An embedding $\Phi:M\to\prod_{\mathcal{U}}[[G_k]]$ from any sub-inverse monoid $M$ of $[[G]]$ is isometric if and only if it preserves the trace.
\end{proposition}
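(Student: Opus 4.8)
The plan is to show that the metric $d$ and the trace are inter-definable by a single universal identity valid in the full semigroup of any pmp groupoid, and then transport this identity along the homomorphism $\Phi$. Concretely, I would first establish, for every pmp groupoid $H$ and all $\alpha,\beta\in[[H]]$, the formula
\[ d(\alpha,\beta)=\operatorname{tr}(\alpha^{-1}\alpha)+\operatorname{tr}(\beta^{-1}\beta)-\operatorname{tr}(\alpha^{-1}\alpha\beta^{-1}\beta)-\operatorname{tr}(\alpha^{-1}\beta). \]
To prove it I would unwind the definitions at the Borel level: writing $s$ for the source map, one checks that $\operatorname{tr}(\alpha^{-1}\beta)=\mu(s(\alpha\cap\beta))$ (since $a^{-1}b$ is a unit exactly when $a=b$), while $\operatorname{tr}(\alpha^{-1}\alpha\beta^{-1}\beta)=\mu(s(\alpha)\cap s(\beta))$ because $\alpha^{-1}\alpha=s(\alpha)$ and $\beta^{-1}\beta=s(\beta)$ are the source idempotents. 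The heart of the computation is the decomposition $s(\alpha\triangle\beta)=(s(\alpha)\triangle s(\beta))\sqcup P_{\neq}$, where $P_{\neq}$ is the set of common sources on which $\alpha$ and $\beta$ disagree; taking measures and substituting the trace expressions yields the displayed formula.

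The second step is to observe that this identity passes verbatim to $\prod_{\mathcal{U}}[[G_k]]$: products are computed coordinatewise, the trace and the metric are $\mathcal{U}$-limits of the coordinate traces and metrics, and $\lim_{k\to\mathcal{U}}$ respects finite sums and differences of bounded sequences, so the formula holds there as well. With the identity available on both sides, one implication is immediate. If $\Phi$ preserves the trace, then since $\Phi$ is an inverse-monoid homomorphism I may rewrite each term, e.g. $\operatorname{tr}(\Phi(\alpha)^{-1}\Phi(\alpha))=\operatorname{tr}(\Phi(\alpha^{-1}\alpha))=\operatorname{tr}(\alpha^{-1}\alpha)$, and similarly for the remaining three terms (all of whose arguments lie in $M$, as $M$ is closed under product and inverse); summing shows $d(\Phi(\alpha),\Phi(\beta))=d(\alpha,\beta)$, so $\Phi$ is isometric.

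For the converse I would specialize the master identity to $\beta=1$. Using $\operatorname{tr}(1)=\mu(G^{(0)})=1$ and $\operatorname{tr}(\alpha^{-1})=\operatorname{tr}(\alpha)$, the formula collapses to $\operatorname{tr}(\alpha)=1-d(\alpha,1)$, and the same relation holds in the ultraproduct, whose unit also has trace $1$. Since $\Phi$ is a monoid homomorphism it sends $1$ to the unit of $\prod_{\mathcal{U}}[[G_k]]$, so if $\Phi$ is isometric then $d(\Phi(\alpha),1)=d(\Phi(\alpha),\Phi(1))=d(\alpha,1)$ and hence $\operatorname{tr}(\Phi(\alpha))=1-d(\Phi(\alpha),1)=1-d(\alpha,1)=\operatorname{tr}(\alpha)$. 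The main obstacle is the first step, namely the exact bookkeeping in the decomposition of $s(\alpha\triangle\beta)$: the naive $L^2$-style guess $\operatorname{tr}(\alpha^{-1}\alpha)+\operatorname{tr}(\beta^{-1}\beta)-2\operatorname{tr}(\alpha^{-1}\beta)$ overcounts exactly the locus $P_{\neq}$ where both elements are defined but differ, and it is precisely the correction $\operatorname{tr}(\alpha^{-1}\alpha\beta^{-1}\beta)-\operatorname{tr}(\alpha^{-1}\beta)$ that repairs this discrepancy.
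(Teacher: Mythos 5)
Your proof is correct and takes essentially the same approach as the paper: your master identity is exactly the paper's formula $d_\mu(\alpha,\beta)=\operatorname{tr}(s(\alpha))+\operatorname{tr}(s(\beta))-\operatorname{tr}(s(\alpha)s(\beta))-\operatorname{tr}(\beta^{-1}\alpha)$, written with $s(\alpha)=\alpha^{-1}\alpha$ and $\operatorname{tr}(\alpha^{-1}\beta)=\operatorname{tr}(\beta^{-1}\alpha)$, and both directions are obtained by transporting trace/distance expressions through $\Phi$. The only (harmless) divergence is in the isometric-implies-trace-preserving direction, where you specialize to $\beta=1$ to get $\operatorname{tr}(\alpha)=1-d(\alpha,1)$ and obtain $\Phi(1)=1$ from unitality of the monoid morphism, whereas the paper uses $\operatorname{tr}(\alpha)=1-d_\mu(s(\alpha),1)-d_\mu(s(\alpha),\alpha)$ and argues that $\Phi(1)=1$ follows from isometry.
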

\begin{proof}
We simply need to write the distance in terms of the trace and vice versa. First one verifies that if $\Phi$ is isometric then $\Phi(1)=1$, since this is the only element of trace $1$, and then that
\[\operatorname{tr}(\alpha)=1-d_\mu(s(\alpha),1)-d_\mu(s(\alpha),\alpha).\]
For the converse, one uses
\begin{align*}
d_\mu(\alpha,\beta)
&=\operatorname{tr}(s(\alpha))+\operatorname{tr}(s(\beta))-\operatorname{tr}(s(\alpha)s(\beta))-\operatorname{tr}(\beta^{-1}\alpha).\qedhere
\end{align*}
\end{proof}

\begin{remark}\label{increasingunionofsoficequivalencerelations}
If $\left\{G_n\right\}_n$ is an increasing sequence of sofic groupoids, then $G=\bigcup_{n=1}^\infty G_n$ is also sofic. Indeed, $\left\{[[G_n]]\right\}_n$ is an increasing sequence of semigroups of $[[G]]$ with dense union, so almost morphisms of each $[[G_n]]$ give us the necessary almost morphisms of $[[G]]$.
\end{remark}

\section{Permanence properties}

In this section we will be concerned with permanence properties of the class of sofic groupoids. We will simply say that a measure $\mu$ on a discrete measurable groupoid $G$ is sofic if $(G,\mu)$ is a (pmp) sofic groupoid.

Given a non-null subgroupoid $H$ of $G$, denote by $\mu_H$ the normalized measure on $H^{(0)}$, i.e., $\mu_H(A)=\mu(A)/\mu(H^{(0)})$ for $A\subseteq H^{(0)}$, and by $\operatorname{tr}_H$ for the corresponding trace on $[[H]]$.

\begin{theorem}\label{theorempermanence}
Let $G$ be a discrete measurable groupoid.
\begin{enumerate}
\item If $\mu$ is a strong limit of sofic measures\footnote{%
Recall that a net $\left\{\mu_i\right\}_{i\in I}$ of measures on a measurable space $(X,\mathcal{B})$ converges strongly to a measure $\mu$ if $\mu_i(A)\to \mu(A)$ for all $A\in\mathcal{B}$.%
}, then $\mu$ is sofic as well..
\item A countable convex combination of sofic measures is sofic.
\item If $\mu$ has a disintegration of the form $\mu=\int_{G^{(0)}} p_xd\nu(x)$, where $\nu$-a.e. $p_x$ is a probability measure such that $(G,p_x)$ is sofic, then $(G,\mu)$ is also sofic. In particular, if a.e.\ ergodic component of $(G,\mu)$ is sofic, so is $(G,\mu)$.
\item\label{theorempermanenceitemsubgroupoid} If $(G,\mu)$ is sofic and $H$ is a non-null subgroupoid of $G$ then $(H,\mu_H)$ is sofic.
\item If $\nu\ll\mu$, where $(G,\nu)$ is pmp, and $\mu$ is sofic, then $\nu$ is sofic.
\item If $\left\{H_n\right\}$ is a countable Borel partition of $G$ by non-null subgroupoids, then $G$ is sofic if and only if each $H_n$ is sofic.

\end{enumerate}
\end{theorem}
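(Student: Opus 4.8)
The six items share a common engine, so I would set that up first. By the almost-morphism reformulation of soficity, it suffices, for every finite $K\subseteq[[G]]_B$ and every $\epsilon>0$, to produce a single finite groupoid $H$ and a map $\pi\colon[[G]]\to[[H]]$ that is multiplicative up to $\epsilon$ and matches traces up to $\epsilon$ on $K$. The feature I would exploit throughout is that the data one must control, namely the finitely many values $\operatorname{tr}_\mu(\alpha)$ and $d_\mu(\alpha,\beta)$ for $\alpha,\beta\in K$, depend on $\mu$ only through the measures of finitely many Borel subsets of $G^{(0)}$, and depend on $\mu$ affinely. I would then prove the items in the order (1), (4), (2), and deduce (6), (3), (5) from these.

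For (1), fix $(K,\epsilon)$. Strong convergence $\mu_i\to\mu$ yields, by directedness, a single sofic $\mu_i$ with $|\mu_i(A)-\mu(A)|$ tiny on the finitely many Borel sets relevant to $K$; a $(K,\epsilon/2)$-almost morphism for $(G,\mu_i)$ is then a $(K,\epsilon)$-almost morphism for $(G,\mu)$, so $\mu$ is sofic. For (4), I would pass to the ultraproduct picture: writing $u=H^{(0)}\in[[G]]$, its image $e$ is an idempotent of trace $\mu(H^{(0)})>0$, and $[[H]]$ sits inside the corner $e\bigl(\prod_{\mathcal{U}}[[G_k]]\bigr)e$ because every $\alpha\in[[H]]$ satisfies $\alpha=u\alpha u$. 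This corner is itself an ultraproduct of corners $e_k[[G_k]]e_k\cong[[G_k|_{e_k}]]$ of finite groupoids, and since $|e_k|/|Y_k|\to_{\mathcal{U}}\mu(H^{(0)})$, renormalizing the trace by $1/\mu(H^{(0)})$ (legitimate because $H$ is non-null) turns the inclusion into a trace-preserving, hence by the trace--distance equivalence (Proposition \ref{theoremtraceanddistance}) isometric, embedding of $[[H]]$. Thus $(H,\mu_H)$ is sofic.

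For (2), I would first treat a finite combination $\sum_{n\le N}t_n\mu_n$ with rational $t_n=p_n/q$: given finite models and almost morphisms $\pi_n$ for the $(G,\mu_n)$, I would glue them exactly as in Proposition \ref{propositionfinitegroupoids}(b), taking $p_n$ copies of the $n$-th model and letting $\pi$ act by $\pi_n$ on the $n$-th block, so that $\operatorname{tr}_H(\pi(\alpha))=\sum_n t_n\operatorname{tr}_{\mu_n}(\alpha)$ reproduces $\operatorname{tr}_\mu(\alpha)$ within $\epsilon$. Irrational coefficients are handled by rational approximation, and a countable sum reduces to the finite case by truncating a small tail and invoking (1) on the strong limit of the truncations. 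Item (6) then follows: the forward implication is immediate from (4), and for the converse I would note that a partition of $G$ into subgroupoids forces $s(g)\in H_n^{(0)}\Rightarrow g\in H_n$, so that $\mu=\sum_n\mu(H_n^{(0)})\,\mu_{H_n}$ exhibits $\mu$ as a countable convex combination of the (sofic, by their definition on $G$) measures $\mu_{H_n}$, and (2) applies.

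The substantive work is in (3) and (5). For (3) I would approximate the disintegration $\mu=\int p_x\,d\nu$ by finite convex combinations $\sum_i t_i\,p_{x_i}$: since $x\mapsto\operatorname{tr}_{p_x}(\alpha)$ is measurable for each $\alpha\in K$, a sampling/partition argument makes these sums match $\operatorname{tr}_\mu$ on $K$ within $\epsilon$, after which (2) gives soficity of each approximant and (1) passes to $\mu$. For (5), the key input is that $f=d\nu/d\mu$ is $G$-invariant, both measures being pmp, so its level sets are invariant subsets; restricting to an approximate level set $\{f\approx c\}$ makes $\nu$ proportional to $\mu$ there, so the corresponding subgroupoid is sofic by (4), and $\nu$ becomes a convex combination of such pieces, sofic by (2), with the approximation error absorbed by (1). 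I expect the main obstacle to be the uniformity required in (3): one must select the finite models for the various $(G,p_x)$ measurably in $x$ and with controlled size, truncating the possibly unbounded model sizes on a set of small $\nu$-measure before the gluing of (2) can be invoked. Establishing the invariance of $d\nu/d\mu$ in (5) is the other point deserving care.
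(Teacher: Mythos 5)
Your proposal is correct and follows essentially the same route as the paper: item (1) by locality of soficity, item (4) by compressing to the corner determined by the image of $1_H$ (the paper phrases this with almost morphisms rather than ultraproduct corners, but it is the same idea), item (2) by the block-gluing of Proposition \ref{propositionfinitegroupoids}(b)/convex-combination groupoids, and items (3), (5), (6) by exactly the sampling-partition, invariant-level-set, and convex-combination reductions the paper uses. Two small remarks: your closing worry about (3) is a non-issue, since the argument only ever samples finitely many sofic $p_{x(j)}$ at a time and no measurable selection of finite models is required; and your explicit use of the $G$-invariance of $d\nu/d\mu$ in (5) is a point the paper needs but leaves implicit, so making it explicit is a genuine improvement in rigor.
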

\begin{proof}
Item 1.\ is clear since soficity is an approximation property for the measure.
\begin{enumerate}
\item[2.] Suppose $\nu,\rho$ are sofic measures and $\mu=t\nu+(1-t)\rho$, $0<t<1$. Take sofic embeddings $\Phi_\nu:[[G]]_\nu\to\prod_\mathcal{U}[[G_k]]$ and $\Phi_\rho:[[G]]_\rho\to\prod_\mathcal{U}[[H_k]]$. Set $\Phi:[[G]]_\mu\to\prod_\mathcal{U}[[tG_k+(1-t)H_k]]$ as
\[\Phi(\alpha)=(\Phi_\nu(\alpha))\cup (\Phi_\rho(\alpha))\]
which, it is easy to check, is a sofic embedding. The countable infinite case follows from 1.
\item[3.] From the previous items it suffices to check that $\mu$ is a limit of convex combinations of sofic $p_x$. Let $K$ be a finite collection of Borel subsets of $G^{(0)}$ and $\epsilon>0$. The maps $x\mapsto\operatorname{tr}_x(A)$, $A\in K$, take values in $[0,1]$, so by partitioning $[0,1]$ and taking preimages, we can find a finite partition $\left\{X_j\right\}_{j=1}^N$ of $G^{(0)}$ for which $|p_x(A)-p_y(A)|<\epsilon$ for all $A\in K$ whenever $x$ and $y$ belong to the same $X_j$. For each non-null $X_j$, choose $x(j)\in X_j$ with $p_{x(j)}$ sofic. Then for $A\in K$,
\begin{align*}
\mu(A)&=\int_{G^{(0)}}p_x(A)d\nu(x)=\sum_j\left(\int_{X_j}p_{x(j)}(A)d\nu(x)\right)\pm\epsilon\\
&=\left(\sum_j\nu(X_j)p_{x(j)}\right)(A)\pm\epsilon.
\end{align*}
\item[4.] Let $K\subseteq[[H]]$ be a finite subset and $\epsilon>0$. Since $[[H]]$ is contained in $[[G]]$ (as a semigroup, but with a different metric), there exists a $(K,\epsilon)$-almost morphism $\theta:[[G]]\to[[F]]$ for some finite pmp groupoid $F$. We may assume that $1_H\in K$, and that $\theta(1_H)$ is an idempotent in $[[F]]$. For $\alpha\in[[H]]$, we have $H^{(0)}\alpha H^{(0)}=\alpha$, so substituting $\theta(\alpha)$ by $\theta(1_H)\theta(\alpha)\theta(1_H)$ (and making $\epsilon$ smaller) if necessary, we can further assume that $\theta(\alpha)$ is contained in $F':=\theta(1_H)F\theta(1_H)$, which is a subgroupoid of $F$. This defines a map $\theta_H:[[H]]\to[[F']]$.

To see that $\theta_H$ approximately preserves the trace, note that the trace on $[[H]]$ and the trace on $[[F']]$ are given respectively by
\[\operatorname{tr}_{H}(\alpha)=\frac{\operatorname{tr}_{\mu}(\alpha)}{\operatorname{tr}_{\mu}(1_H)}\qquad\text{and}\qquad\operatorname{tr}_{F'}(\theta_H(\alpha))=\frac{\operatorname{tr}_{F}(\theta(\alpha))}{\operatorname{tr}_{F}(\theta(1_H))},\]
and these numbers are as close as necessary if $\epsilon$ is small enough. Products are dealt with similarly, so $\theta_H$ is an approximate morphism as necessary.
\item[5.] Let $\epsilon>0$. Let $f=d\nu/d\mu$. Take a countable partition $X_1,X_2,\ldots$ of $G^{(0)}$ such that $|f(x)-f(y)|<\epsilon$ whenever $x$ and $y$ belong to the same $X_j$, and fix points $x(j)\in X_j$. Then for all $A\subseteq X$,
\[\nu(A)=\sum_j\int_{X_j\cap A}f(x(j))d\mu(x)\pm\epsilon=\sum_jf(x(j))\mu(X_j)\mu_j(A)\pm\epsilon,\]
where $\mu_j$ is the normalized measure on $X_j$. Since $1=\nu(X)=\sum_j f(x_j)\mu(X_j)\pm\epsilon$, it is not hard to obtain
\[\nu(A)=\sum_j\left(\frac{f(x_j)\mu(X_j)}{\sum_i f(x_i)\mu(X_i)}\right)\mu_j(A)\pm2\frac{\epsilon}{1\pm\epsilon}\]
Each $\mu_j$ is sofic by item 4., so items 1.\ and 2.\ imply that $\nu$ is sofic.
\item[6.] Apply items 4.\ and 2.\ with the fact that $G=\sum_j \mu(H_j)H_j$.\qedhere
\end{enumerate}
\end{proof}

Now we will deal with finite-index subgroupoids.
\begin{definition}
A subgroupoid $H\subseteq G$ is said to have \emph{finite index} in $G$ if there exist $\psi_1,\ldots,\psi_n\in[G]$ such that $\left\{\psi_iH:i=1\ldots n\right\}$ is a partition of $G$. We will call $\psi_1,\ldots,\psi_n$ \emph{left transversals} of $H$ in $G$.
\end{definition}
This definition restricts to the usual notions of finite index subgroups and equivalence relations, as defined in \cite{MR1007409}, in the ergodic case. Note that if $H\subseteq G$ is of finite index then $H^{(0)}=G^{(0)}$.

\begin{theorem}\label{theoremfiniteindexgroupoids}
Suppose $H\subseteq G$ is of finite index. If $H$ is sofic, so is $G$.
\end{theorem}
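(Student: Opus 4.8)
The plan is to reduce soficity of $G$ to that of $H$ by assembling finite models for $G$ out of finite models for $H$ together with the finite coset combinatorics, mirroring the finite-index argument for sofic groups. The starting point is that the partition $G=\bigsqcup_{i=1}^n\psi_iH$ lets us decompose any $\alpha\in[[G]]_B$ as $\alpha=\bigsqcup_{i=1}^n\psi_i\beta_i$ with $\beta_i:=\psi_i^{-1}(\alpha\cap\psi_iH)\in[[H]]_B$. Thus $[[G]]$ is generated, in the approximate sense relevant to soficity, by $[[H]]$ together with the finitely many transversals $\psi_1,\dots,\psi_n$ (which we may take with $\psi_1=1_G$, since the coset of the units is $H$ itself). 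As soficity only asks for a $(K,\epsilon)$-almost morphism for each finite $K\subseteq[[G]]_B$, I would fix such a $K$ and record the finitely many $\psi_i$ and $[[H]]_B$-elements out of which its members are built.

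The key structural input is the \emph{coset cocycle}. Because each $\psi_i$ lies in $[G]$, left multiplication by the germ of $\phi\in[G]$ permutes the $n$ cosets over almost every base point $x\in G^{(0)}$, yielding a measurable cocycle $c\colon[G]\times G^{(0)}\to S_n$ together with elements $h_i(\phi)\in[H]$ characterized by the requirement that the germ of $\psi_{c(\phi,x)(i)}^{-1}\phi\psi_i$ at $x$ lie in $H$. The decisive point is that all of this data lives over the common unit space $G^{(0)}=H^{(0)}$: the fibers $\{x:c(\phi,x)(i)=j\}$ are Borel subsets of $H^{(0)}$, i.e.\ idempotents of $[[H]]$, and the $h_i(\phi)$ are elements of $[[H]]$. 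I would therefore enlarge $K$ to a finite set $K_H\subseteq[[H]]_B$ containing every $h_i(\phi)$ and every characteristic idempotent $1_{\{x:c(\phi,x)(i)=j\}}$ arising from the generators in $K$ and from the finitely many products needed to check the morphism property.

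Given $\epsilon>0$, soficity of $H$ supplies a finite pmp groupoid $H'$ with unit space $Y$ and a $(K_H,\delta)$-almost morphism $\theta\colon[[H]]\to[[H']]$, with $\delta$ as small as desired. I would then build the finite model $F$ on the unit space $\{1,\dots,n\}\times Y$ and define $\pi\colon[[G]]\to[[F]]$ on generators by letting $\pi(\phi)$ send $(i,y)$ to $(j,\theta(h_i(\phi))(y))$, where $j$ is read off from which of the approximate idempotents $\theta(1_{\{x:c(\phi,x)(i)=j\}})$ contains $y$; on $\beta\in[[H]]$ this specializes to a diagonal action twisted by the $\theta(1_{\{\dots\}})$. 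Extending $\pi$ over the decomposition $\alpha=\bigcup_i\psi_i\beta_i$, the almost-morphism property should follow from the exact cocycle identity $c(\phi\phi',x)=c(\phi,\phi' x)\,c(\phi',x)$ and the matching relation among the $h_i$, transported to $F$ with error controlled by $\delta$ and $|K_H|$, using that $\theta$ approximately preserves products and the Boolean operations on the idempotents $1_{\{\dots\}}$.

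For the trace I would argue that $\pi(\phi)$ fixes $(i,y)$ essentially exactly when $\psi_i^{-1}\phi\psi_i$ fixes $y$; since $\{x:\psi_i^{-1}\phi\psi_i\text{ fixes }x\}=\psi_i^{-1}(\operatorname{Fix}\phi)$, where $\operatorname{Fix}\phi=\{x\in G^{(0)}:\phi\text{ fixes }x\}$, and since each $\psi_i\in[G]$ is measure-preserving, every one of the $n$ copies contributes $\mu(\operatorname{Fix}\phi)=\operatorname{tr}_G(\phi)$; averaging over the $n$ copies gives $\operatorname{tr}_F(\pi(\phi))\approx\operatorname{tr}_G(\phi)$, and the general $\alpha$ follows from the morphism property. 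The main obstacle I anticipate is not a single estimate but the bookkeeping needed to realize the $S_n$-valued cocycle coherently inside the finite model: one must check that, for all relevant generators and their products simultaneously, the approximate idempotents $\theta(1_{\{\dots\}})$ partition $Y$ up to small error, so that the choice of $j$ is well defined and composes correctly. This is precisely where the finiteness of $K_H$ and the smallness of $\delta$ must be balanced; once that is arranged, the three requirements on $\pi$—being well defined, approximately multiplicative, and trace-preserving—all reduce to the corresponding approximate properties of $\theta$ on the finite set $K_H$.
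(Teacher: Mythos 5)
Your proposal is correct and is essentially the paper's proof: your cocycle data $(c,h_i(\phi))$ encodes exactly the paper's matrix decomposition $\alpha_{i,j}=\psi_i^{-1}\alpha\psi_j\cap H$, your finite model on $\{1,\dots,n\}\times Y$ is the paper's $G_k\times Y^2$ with matrix units $E_{i,j}$, and your trace argument via $\psi_i^{-1}(\operatorname{Fix}\phi)$ and measure-preservation of the transversals is the same computation the paper performs. The only real difference is that the paper defines $\Xi(\alpha)=\bigcup_{i,j}\Phi(\alpha_{i,j})\otimes E_{i,j}$ directly into an ultraproduct of finite models, where the coset identities (disjointness, $\bigcup_j\alpha_{i,j}\beta_{j,l}=(\alpha\beta)_{i,l}$) hold exactly, so the approximate-idempotent bookkeeping you identify as the main obstacle never arises.
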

\begin{proof}
Suppose that $\psi_1,\ldots,\psi_N$ are left transversals for $H\subseteq G$. For each $\alpha\in [[G]]$, let $\alpha_{i,j}=\psi_i^{-1}\alpha\psi_j\cap H$. Note that $\alpha_{i,j}\in[[H]]$ and that $\alpha_{i,j}^{-1}=\alpha_{j,i}$. Moreover, $\alpha_{i,j}\alpha_{k,l}=\varnothing$ whenever $j\neq l$. Let $Y$ be a set with $N$ elements. Given $(i,j)\in Y^2$, let $E_{i,j}=\left\{(i,j)\right\}\in[[Y^2]]$ (as a partial transformation on $Y$, $E_{i,j}$ is simply defined by $E_{i,j}(j)=i$.

Given $k$ and $\gamma\in[[G_k]]$, set $\gamma\otimes E_{i,j}=\gamma\times E_{i,j}\in[[G_k\times Y^2]]$, and then define $\Xi:[[G]]\to\prod_{\mathcal{U}}[[G_k\times Y^2]]$ by
\[\Xi(\alpha)=\bigcup_{i,j}\Phi(\alpha_{i,j})\otimes E_{i,j}\]

First let's show that $\Xi$ is well-defined, or more precisely that the terms in the right-hand side have disjoint sources and ranges: Let $(i,j)$ and $(k,l)$ be given. Then
\[\left(\Phi(\alpha_{i,j})\otimes E_{i,j}\right)\left(\Phi(\alpha_{k,l})\otimes E_{k,l}\right)^{-1}=\Phi(\alpha_{i,j}\alpha_{l,k})\otimes(E_{i,j}E_{l,k})\]
If $j\neq l$ the right-hand side is empty, so assume $j=l$. Then
\[\alpha_{i,j}\alpha_{j,k}=(\psi_i^{-1}\alpha\psi_j\cap H)(\psi_j^{-1}\alpha\psi_k\cap H)\tag{1}\]
If this product is nonempty, then we have $p_i\in\psi_i$, $p_j,q_j\in\psi_j$, $q_k\in\psi_k$ and $g,h\in \alpha$ such that the product $(p_i^{-1}gp_j)(q_j^{-1}hq_k)$ is defined, and both terms belong to $H$. But in particular $s(p_j)=r(q_j^{-1})=s(q_j)$, so $p_j=q_j$. Similarly $g=h$, and so $p_i^{-1}q_k\in H$, thus $q_k\in\psi_i H\cap\psi_k H$, which implies $i=k$.

This proves that the ranges of the terms in the definition of $\Xi(\alpha)$ are disjoint. The sources are dealt with similarly, and so $\Xi$ is well-defined.

Now we need to show that $\Xi$ is a morphism. Suppose $\alpha,\beta\in[[G]]$. We have
\[\Xi(\alpha)\Xi(\beta)=\bigcup_{i,j,k,l}\Phi(\alpha_{i,j}\beta_{k,l})\otimes (E_{i,j}E_{k,l})=\bigcup_{i,j,l}\Phi(\alpha_{i,j}\beta_{j,l})\otimes E_{i,l}.\]
On the other hand $\Xi(\alpha\beta)=\bigcup_{i,l}\Phi((\alpha\beta)_{i,l})\otimes E_{i,l}$, so we are done if we show that for given $i,l$,
\[\bigcup_j\alpha_{i,j}\beta_{j,l}=(\alpha\beta)_{i,l}.\]

The inclusion $\subseteq$ is quite straightforward, using a similar argument to the one right after (1) above. For the converse, suppose $p_i^{-1}abp_l\in(\alpha\beta)_{i,l}$, where $p_i\in\psi_i$, $p_l\in\psi_l$, $a\in\alpha$ and $b\in\beta$. Choose $j$ such that $bp_l\in\psi_j H$, so there is a unique $p_j\in\psi_j$ such that the product $p_j^{-1}bp_l$ is defined and in $H$. Therefore 
\[p_i^{-1}abp_l=(p_i^{-1}a p_j)(p_j^{-1}bp_l)\in\alpha_{i,j}\beta_{j,l}.\]

Finally, we need to show that $\Xi$ is trace-preserving. Note that
\[\operatorname{tr}\Xi(\alpha)=\frac{1}{N}\sum_{i=1}^N\operatorname{tr}(\alpha_{i,i}),\]
so we are done if we prove that $\operatorname{tr}(\alpha_{i,i})=\operatorname{tr}(\alpha)$.

Let's show that $\alpha_{i,i}\cap G^{(0)}=s\left\{g\in\psi_i:r(g)\in\alpha\cap G^{(0)}\right\}$. An element of $\alpha_{i,i}\cap G^{(0)}$ has the form $x=p_i^{-1}ap_i$ for $a\in\alpha$ and $p_i\in\psi_i$. It follows that $x=s(p_i)$, so $r(p_i)=a\in\alpha\cap G^{(0)}$. Conversely, if $x=s(g)$, where $g\in\psi_i$ and $r(g)\in\alpha\cap G^{(0)}$, then $x=g^{-1}r(g)g\in\psi_i^{-1}\alpha\psi_i\cap G^{(0)}$.

Finally, we obtain
\begin{align*}
\operatorname{tr}(\alpha_{i,i})&=\mu(\psi_i^{-1}\alpha\psi_i\cap G^{(0)})=\mu(s(\psi_i\cap r^{-1}(\alpha\cap G^{(0)})))=\mu(r(\psi_i\cap r^{-1}(\alpha\cap G^{(0)})))\\
&=\mu(\alpha\cap G^{(0)})=\operatorname{tr}(\alpha),
\end{align*}
because $r|_{\psi_i}:\psi_i\to G^{(0)}$ is surjective.\qedhere
\end{proof}

We will say that a pmp groupoid $G$ is \emph{periodic} if $s^{-1}(x)$ is finite for all $x\in G^{(0)}$, and that $G$ is \emph{hyperfinite} if it is an increasing union of subgroupoids with finite fibers (this is the measured analogue of the AF groupoids introduced in \cite{MR584266})

\begin{corollary}\label{periodicequivalencerelationsaresofic}
Every hyperfinite groupoid is sofic.
\end{corollary}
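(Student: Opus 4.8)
The plan is to peel off the structure of a hyperfinite groupoid in stages until only the finite-index theorem just proved and the soficity of the trivial groupoid remain. First, since a hyperfinite groupoid is by definition an increasing union of subgroupoids with finite fibres (periodic subgroupoids, all sharing the unit space $G^{(0)}$), Remark \ref{increasingunionofsoficequivalencerelations} reduces everything to proving that a single \emph{periodic} pmp groupoid $G$ is sofic. Next I would stratify such a $G$ by the two functions $x\mapsto|\{r(g):s(g)=x\}|$ (the orbit size) and $x\mapsto|\{g:s(g)=r(g)=x\}|$ (the isotropy order). Both are Borel, finite, and constant along orbits, so their common level sets $X_{m,\ell}$ are $G$-invariant Borel subsets of $G^{(0)}$, and the restrictions $G|_{X_{m,\ell}}$ form a countable Borel partition of $G$ into subgroupoids. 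By Theorem \ref{theorempermanence}(6) it then suffices to treat one stratum, so I may assume every orbit of $G$ has exactly $m$ units and every isotropy group has order $\ell$, whence every source-fibre has exactly $N=m\ell$ elements.

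The crux is to show that for such a $G$ the unit space $G^{(0)}$ is a finite-index subgroupoid, i.e.\ that $G$ is a disjoint union of $N$ full bisections. I would first enumerate the isotropy bundle $\{g:s(g)=r(g)\}$ by $\ell$ measurable sections $\sigma_1,\dots,\sigma_\ell$ (possible by Lusin--Novikov, as the fibres are finite). For the transversal direction, fix a Borel linear order on $G^{(0)}$; on each $m$-element orbit it induces a cyclic successor bijection $T$, and the orbit equivalence relation $R$ decomposes as $\bigsqcup_{k=0}^{m-1}\operatorname{graph}(T^{k})$. Lifting each $\operatorname{graph}(T^{k})$ to a bisection $\tau_k\subseteq G$ (again by Lusin--Novikov), every $g\in G$ factors uniquely as $g=\tau_k\sigma_j$, the index $k$ recording the $R$-class of the pair $(r(g),s(g))$ and $j$ the isotropy coordinate at the source. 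Hence $G=\bigsqcup_{k,j}\tau_k\sigma_j$ is a partition into $N=m\ell$ full bisections. Because $G$ is pmp, the identity $\mu(s(\alpha))=\mu(r(\alpha))$ applied to sub-bisections forces every full bisection to preserve $\mu$, so each $\tau_k\sigma_j$ automatically lies in $[G]$ and is a genuine left transversal; thus $G^{(0)}$ has index $N$ in $G$.

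It then remains to check the base case that the trivial groupoid $G^{(0)}$ is sofic. Its full semigroup is just the measure algebra, all of whose elements are idempotents multiplying by intersection, so given a finite $K$ and $\epsilon>0$ I would replace $K$ by the finite Boolean algebra it generates, approximate the measures of its atoms by fractions with a common denominator $n$, and send the atoms to disjoint blocks of the $n$-point trivial finite groupoid. This is an exact morphism on the generated subalgebra and preserves the trace to within $\epsilon$, giving the required $(K,\epsilon)$-almost morphism. With $G^{(0)}$ sofic and of finite index in each stratum, Theorem \ref{theoremfiniteindexgroupoids} shows every stratum is sofic, and the reductions above then yield soficity of periodic, and hence of hyperfinite, groupoids.

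The genuinely delicate step is the third one: producing \emph{exactly} $N$ full bisections that partition $G$ measurably. The isotropy factor is routine via Lusin--Novikov, but the transversal factor requires decomposing the finite equivalence relation $R$ into $m$ full bisections, and the subtlety is to do this Borel-measurably and coherently across orbits. My remedy is to manufacture the cyclic successor map from a fixed Borel linear order, which sidesteps any measurable König/edge-colouring theorem; the pmp hypothesis then supplies measure-preservation for free, so no separate measure-combinatorial argument is needed. The only other point demanding care is the Borel-measurability of the orbit-size and isotropy-order functions, which again follows from Lusin--Novikov since all fibres are finite.
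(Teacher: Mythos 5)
Your proof is correct, and its skeleton coincides with the paper's: reduce to the finite-fiber (periodic) case via Remark \ref{increasingunionofsoficequivalencerelations}, stratify $G$ into subgroupoids, show that the unit space (a trivial groupoid, proved sofic separately) has finite index in each stratum, and conclude with Theorem \ref{theoremfiniteindexgroupoids} and Theorem \ref{theorempermanence}(6). The genuine differences lie in how the two key steps are executed. The paper stratifies only by fiber cardinality, $G_n=\left\{g\in G:|s^{-1}(s(g))|=n\right\}$, and dismisses the finite-index claim with the single phrase ``an application of Lusin--Novikov''; what that phrase hides is exactly the delicate point you identify, namely a Borel partition of a groupoid with constant fiber size $n$ into $n$ full bisections, which is in effect a measurable K\"onig edge-colouring problem. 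Your finer stratification by orbit size $m$ and isotropy order $\ell$, the cyclic-successor map induced by a Borel linear order, and the $\ell$ isotropy sections make that step explicit and constructive (the unique factorization $g=\tau_k\sigma_j$ does yield $N=m\ell$ disjoint full bisections, and measure-preservation is automatic since $G$ is pmp), so your argument is a legitimate and more rigorous filling-in of the paper's one-liner rather than a rederivation of it. The base cases also differ: you build $(K,\epsilon)$-almost morphisms directly by discretizing the finite Boolean algebra generated by $K$ into blocks of an $n$-point set, whereas the paper disintegrates $\mu=\int_X\delta_x\,d\mu(x)$ into point masses and invokes the disintegration permanence result, Theorem \ref{theorempermanence}(3). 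Your version is more elementary and self-contained; the paper's is shorter given the permanence machinery already established. Both routes are valid, so the proposal stands as written.
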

\begin{proof}
First note that every measure space $(X,\mu)$, seen as a trivial groupoid (i.e., $X=X^{(0)}$), is sofic. Indeed, $\mu=\int_X\delta_x\mu(x)$, where $\delta_x$ is the point-mass measure on $x$, and $(X,\delta_x)$, as a pmp groupoid, is isomorphic to a singleton, hence finite and sofic.

Suppose $G$ has finite fibers. Let $G_n=\left\{g\in G:|s^{-1}(s(g))|=n\right\}$. Then the $G_n$ are subgroupoids of $G$ with $G=\bigcup_nG_n$, so it suffices to show that each $G_n$ is sofic. An application of Lusin-Novikov implies that the subgroupoid $G_n^{(0)}$, which is simply a measure space, has finite index in $G_n$, which is therefore finite. The general case follows from the remark after Proposition \ref{theoremtraceanddistance}.\qedhere
\end{proof}

\begin{theorem}
Two pmp groupoids $(G,\mu)$ and $(H,\nu)$ are sofic if and only if $(G\times H,\mu\times\nu)$ is sofic.
\end{theorem}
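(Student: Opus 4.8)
The plan is to prove both implications by transporting isometric embeddings of full semigroups, using Proposition \ref{theoremtraceanddistance} throughout to replace the metric by the trace. Recall that the unit space of $G\times H$ is $G^{(0)}\times H^{(0)}$ and that its trace is $\operatorname{tr}(\gamma)=(\mu\times\nu)(\gamma\cap(G^{(0)}\times H^{(0)}))$; in particular rectangles satisfy $\operatorname{tr}(\alpha\times\beta)=\operatorname{tr}_G(\alpha)\operatorname{tr}_H(\beta)$.

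For the forward implication, suppose $G\times H$ is sofic. The map $\iota\colon[[G]]\to[[G\times H]]$ given by $\iota(\alpha)=\alpha\times H^{(0)}$ is an injective homomorphism of inverse monoids, since $(\alpha\times H^{(0)})(\beta\times H^{(0)})=\alpha\beta\times H^{(0)}$ and $\iota(1_G)=1_{G\times H}$, and it preserves the trace because $(\alpha\times H^{(0)})\cap(G^{(0)}\times H^{(0)})=(\alpha\cap G^{(0)})\times H^{(0)}$ has measure $\operatorname{tr}_G(\alpha)\cdot\nu(H^{(0)})=\operatorname{tr}_G(\alpha)$. By Proposition \ref{theoremtraceanddistance} such a trace-preserving monoid homomorphism is isometric, so composing $\iota$ with a sofic embedding of $[[G\times H]]$ exhibits $G$ as sofic; the same argument with $G^{(0)}\times\beta$ handles $H$.

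The substance is the converse. Assume isometric embeddings $\Phi_G\colon[[G]]\to\prod_{\mathcal U}[[G_k]]$ and $\Phi_H\colon[[H]]\to\prod_{\mathcal U}[[H_k]]$ with $G_k,H_k$ finite. On rectangles I would set $\Psi(\alpha\times\beta)=\Phi_G(\alpha)\times\Phi_H(\beta)\in\prod_{\mathcal U}[[G_k\times H_k]]$, computed coordinatewise. Products and inverses of rectangles are rectangles, so $\Psi$ is multiplicative there; and writing $\Phi_G(\alpha)=(a_k)_{\mathcal U}$, $\Phi_H(\beta)=(b_k)_{\mathcal U}$ one has $\operatorname{tr}(a_k\times b_k)=\operatorname{tr}(a_k)\operatorname{tr}(b_k)$ in each finite $G_k\times H_k$, and since the $\mathcal U$-limit of a product of bounded sequences is the product of the limits, $\operatorname{tr}\Psi(\alpha\times\beta)=\operatorname{tr}_G(\alpha)\operatorname{tr}_H(\beta)=\operatorname{tr}(\alpha\times\beta)$. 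I would then extend $\Psi$ to the inverse monoid $D$ of finite orthogonal unions of rectangles by $\Psi(\bigsqcup_iR_i)=\bigsqcup_i\Psi(R_i)$, and verify that $D$ is dense in $[[G\times H]]$: fixing Borel partitions $\{A_m\}$ of $G$ and $\{B_n\}$ of $H$ into bisections (Lusin--Novikov), any $\gamma\in[[G\times H]]_B$ splits as $\bigsqcup_{m,n}\gamma\cap(A_m\times B_n)$, and via the source isomorphism $A_m\times B_n\cong s(A_m)\times s(B_n)$ each piece becomes a Borel subset of a product space, approximable in measure by finite disjoint unions of measurable rectangles; pulling these back and retaining finitely many $(m,n)$ produces an element of $D$ close to $\gamma$. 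A trace-preserving homomorphism on the dense sub-inverse-monoid $D$ is isometric by Proposition \ref{theoremtraceanddistance} and extends continuously to an isometric embedding $[[G\times H]]\to\prod_{\mathcal U}[[G_k\times H_k]]$, which is exactly soficity of $G\times H$.

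The hard part will be showing that $\Psi$ is well defined on $D$, i.e.\ that two orthogonal-union representations of the same element give the same value. This reduces to proving that $\Phi_G$ preserves orthogonal joins: if $\alpha=\alpha_1\sqcup\alpha_2$ with $\alpha_1,\alpha_2$ orthogonal, then $\Phi_G(\alpha)=\Phi_G(\alpha_1)\sqcup\Phi_G(\alpha_2)$. Here I would first note $\Phi_G(0)=0$, since the zero idempotent is the unique idempotent of trace $0$ and $\Phi_G$ preserves trace; this makes $\Phi_G(\alpha_1),\Phi_G(\alpha_2)$ orthogonal, so their join exists in the ultraproduct. Each $\Phi_G(\alpha_i)\le\Phi_G(\alpha)$ in the natural order, whence $\rho:=\Phi_G(\alpha_1)\sqcup\Phi_G(\alpha_2)\le\Phi_G(\alpha)$; and on idempotents $\operatorname{tr}\Phi_G(e\sqcup f)=\operatorname{tr}\Phi_G(e)+\operatorname{tr}\Phi_G(f)$ forces $\Phi_G(e\sqcup f)=\Phi_G(e)\sqcup\Phi_G(f)$, because an idempotent dominating another of equal trace must equal it. Taking sources gives $s(\rho)=\Phi_G(s\alpha_1\sqcup s\alpha_2)=s\Phi_G(\alpha)$, so $\rho=\Phi_G(\alpha)s(\rho)=\Phi_G(\alpha)$. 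Passing two decompositions of a common $\gamma\in D$ to their pairwise sub-rectangle refinement then yields well-definedness. These are precisely the orthogonal-union manipulations in ultraproducts licensed at the end of Section 1, and getting them to interact correctly with $\Phi_G$ and $\Phi_H$ is the only genuinely delicate point.
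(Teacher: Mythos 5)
Your strategy is essentially the paper's: the easy direction via $\alpha\mapsto\alpha\times H^{(0)}$, and the converse by defining a tensor-product map on a submonoid of finite orthogonal unions of rectangles, checking it is a trace-preserving homomorphism there, and extending by density. Your well-definedness argument (preservation of orthogonal joins via $\Phi_G(0)=0$, order preservation, and trace additivity on idempotents) is sound, and is actually spelled out more carefully than in the paper, which merely asserts that sofic embeddings preserve sources, ranges and intersections.

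The genuine gap is in your density argument, and it sits exactly where the paper has to work hardest. After splitting $\gamma$ into the pieces $\gamma\cap(A_m\times B_n)$ and approximating each piece in measure by finite disjoint unions of rectangles inside its block, you assert that pulling back and keeping finitely many blocks ``produces an element of $D$ close to $\gamma$''. Within a single block this is fine: the approximating rectangles are disjoint subsets of the one bisection $A_m\times B_n$, hence automatically orthogonal. But across distinct blocks nothing forces disjointness of sources or ranges. The Lusin--Novikov bisections $A_m$ typically have heavily overlapping (often full) sources --- if $G$ is the orbit equivalence relation of an automorphism $T$, both the unit space and the graph of $T$ are blocks whose source is all of $G^{(0)}$ --- and although the true pieces $\gamma\cap(A_m\times B_n)$ do have pairwise disjoint sources (they are disjoint subsets of the bisection $\gamma$), their measure-theoretic approximations overshoot, so the approximations' sources can overlap on a set of positive measure. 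The resulting union is then not a bisection at all, hence not in $D$, nor even in $[[G\times H]]$. This is fixable: one shows the cross-block overlaps are small, using
\[
s(P_{m,n})\cap s(P_{m',n'})\subseteq s\bigl(P_{m,n}\triangle(\gamma\cap(A_m\times B_n))\bigr)\cup s\bigl(P_{m',n'}\triangle(\gamma\cap(A_{m'}\times B_{n'}))\bigr),
\]
and then excises them at small cost, checking that excision keeps one inside $D$ and repeating the argument for ranges (the paper does this by applying it to $\phi^{-1}$ and intersecting). That excision step is precisely the paper's $h_{i,j}$ argument, which is the crux of its proof of this theorem; as written, your claim that the approximant lies in $D$ does not stand without it.
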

\begin{proof}
One direction is clear, since $[[G]]$ embeds isometrically into $[[G\times H]]$ via $\alpha\mapsto\alpha\times H^{(0)}$, and similarly for $[[H]]$.

Let $M$ be the submonoid of $[[G\times H]]$ of elements of the form $\bigcup_{i=1}^n\alpha_i\times\beta_i$, where $\alpha_i\in[[G]]$, $\beta_i\in[[H]]$, and for $i\neq j$, $s(\alpha_i)\cap s(\alpha_j)=\varnothing$ or $s(\beta_i)\cap s(\beta_j)=\varnothing$, and $r(\alpha_i)\cap r(\alpha_j)=\varnothing$ or $r(\beta_i)\cap r(\beta_j)=\varnothing$. Let's show that $M$ is dense in $[[G\times H]]$.

Let $\phi\in[[G\times H]]$ and $\epsilon>0$. We can take $\alpha_i\in[[G]]$ and $\beta_i\in[[H]]$ such that $(\mu\otimes\nu)(\phi\triangle(\bigcup\alpha_i\times\beta_i))<\epsilon$, and with the $\alpha_i\times\beta_i$ disjoint. For $i\neq j$, let 
\[h_{i,j}=s|_{\alpha_i\times\beta_i}^{-1}(s(\alpha_j\times\beta_j))=\alpha_is(\alpha_j)\times\beta_is(\beta_j)\qquad\text{and}\qquad h_i=\bigcup_{j\neq i}h_{i,j}.\]

Let $x\in h_{i,j}\cap\phi$, so $s(x)=s(a_j,b_j)$ for some $(a_j,b_j)\in\alpha_j\times\beta_j$. If $(a_j,b_j)\in\phi$, we'd obtain $(a_j,b_j)=x\in\alpha_i\times\beta_i$, which happens only if $i=j$. This proves that
\[s\left(\bigcup_i\left(h_i\cap\phi\right)\right)\subseteq s\left(\bigcup_j(\alpha_j\times\beta_j)\setminus\phi\right)\]
In particular, $d(\phi,\phi\setminus\bigcup_ih_i)<\epsilon$, from which follows that $d(\bigcup_i(\alpha_i\times\beta_i\setminus h_i),\phi)<2\epsilon$.

Since each $h_i$ is a rectangle, we can rewrite $\bigcup_i(\alpha_i\times\beta_i\setminus h_i)$ as a union of disjoint rectangles with the desired property for the source map. To deal with the range map one can apply the same argument to $\phi^{-1}$ and take intersections. 

So given sofic embeddings $\Phi:[[G]]\to\prod_\mathcal{U}[[G_n]]$ and $\Psi:[[H]]\to\prod_\mathcal{U}[[H_n]]$ set $\Phi\otimes\Psi:M\to[[G_n\times H_n]]$ by
\[\Phi\otimes\Psi(\bigcup\alpha_i\times\beta_i)=\bigcup\Phi(\alpha_i)\times\Psi(\beta_i),\]
where the $\alpha_i$ and $\beta_i$ satisfy the condition in the definition of $M$.

The element in the right-hand side is well-defined and doesn't depend on the choice of $\alpha_i$ and $\beta_i$ since sofic embeddings preserve sources, ranges, and intersections. $\Phi\otimes\Psi$ is then an trace-preserving embedding of $M$, and hence extends uniquely to a isometric embeddings of $[[G\times H]]$.\qedhere
\end{proof}

\section{Soficity and the full group}

Let's fix some notation here as well. Given a probability space $X$, we denote its measure algebra (i.e., the algebra of measurable subsets of $X$ modulo null sets) by $\operatorname{MAlg}(X)$. Given a pmp groupoid $(G,\mu)$, the set of idempotents of $[[G]]$ (i.e., elements $\alpha$ such that $\alpha^2=\alpha)$ is precisely $\operatorname{MAlg}(G^{(0)})$.

Given $\alpha\in[[G]]$, define $\operatorname{fix}\alpha=\alpha\cap G^{(0)}$ and $\operatorname{supp}\alpha=s(\alpha)\setminus\operatorname{fix}\alpha$. These defines maps $\operatorname{fix},\operatorname{supp}:[[G]]\to\operatorname{MAlg}(G^{(0)})$, and we can extend these maps to ultraproducts of these semigroups.

Again, for each $n\in\mathbb{N}$, fix $Y_n$ a set with $n$ elements, consider the full equivalence relation $Y_n^2$, whose full group $[Y_n^2]$ can be identified as the permutation group $\mathfrak{S}_n$ on $n$ elements, as in Example \ref{exampleequivalencerelations}. We denote the measure algebra of $Y_n$ by $\operatorname{MAlg}(n)$.

A well-known theorem of Dye \cite{MR0158048} states that when $R$ is an aperiodic equivalence relation, the full group $[R]$ completely determines $R$. With this in mind, we prove that a pmp groupoid $G$ is sofic if and only if $[G]$ embeds isometrically into $\prod_{\mathcal{U}}\mathfrak{S}_n$, as long as $G$ doesn't contain ``trivial parts''. This solves a question posed by Conley, Kechris and Tucker-Drob in \cite{MR3035288} in this case.

\begin{definition}
A metric group $(\Gamma,d)$ is \emph{metrically sofic} if it embeds isometrically into $\prod_\mathcal{U}\mathfrak{S}_n$.
\end{definition}

We will need a few technical lemmas relating the full group $[G]$, the measure algebra $\operatorname{MAlg}(G^{(0)})$ and the full semigroup $[[G]]$.

\begin{lemma}\label{lemmasupports}
Let $\theta:[G]\to\prod_{\mathcal{U}}\mathfrak{S}_n$ be an isometric embedding and $\alpha,\beta\in[G]$.
\begin{enumerate}
\item\label{lemmasupportandfix} $\operatorname{supp}\alpha=\operatorname{fix}\beta$ if and only if $\operatorname{supp}(\theta(\alpha))=\operatorname{fix}(\theta(\beta))$.
\item\label{lemmadisjointsupports} $\operatorname{supp}\alpha\cap\operatorname{supp}\beta=\varnothing$ if and only if $\operatorname{supp}\theta(\alpha) \cap\operatorname{supp}\theta(\beta)=\varnothing$
\end{enumerate}
\end{lemma}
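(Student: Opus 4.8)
The plan is to reduce both items to a single ``support calculus'' identity that is manifestly preserved by any trace-preserving homomorphism, and then transport it across $\theta$. The starting point is that for $\gamma$ in a full group one has $\operatorname{tr}(\gamma)=\mu(\operatorname{fix}\gamma)$ and, since $s(\gamma)=G^{(0)}$, that $\mu(\operatorname{supp}\gamma)=1-\operatorname{tr}(\gamma)=d(\gamma,1)$. As $\theta$ is an isometric group embedding it fixes $1$, is a homomorphism, and preserves the trace (Proposition \ref{theoremtraceanddistance}); hence $\mu(\operatorname{supp}\theta(\gamma))=\mu(\operatorname{supp}\gamma)$ and $\theta(\alpha\beta)=\theta(\alpha)\theta(\beta)$. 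Consequently any condition on $\alpha,\beta$ phrased purely through $\operatorname{tr}(\alpha),\operatorname{tr}(\beta),\operatorname{tr}(\alpha\beta)$ matches automatically on both sides.

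The key lemma I would isolate is that, for $\alpha,\beta$ in the full group of \emph{any} pmp groupoid,
\[\operatorname{supp}\alpha\cap\operatorname{supp}\beta=\varnothing\quad\Longleftrightarrow\quad\operatorname{tr}(\alpha\beta)=\operatorname{tr}(\alpha)+\operatorname{tr}(\beta)-1.\]
One inclusion is immediate: a common unit $x\in\alpha\cap\beta$ satisfies $x=xx\in\alpha\beta$, so $\operatorname{fix}\alpha\cap\operatorname{fix}\beta\subseteq\operatorname{fix}(\alpha\beta)$, giving $\operatorname{supp}(\alpha\beta)\subseteq\operatorname{supp}\alpha\cup\operatorname{supp}\beta$; combined with inclusion--exclusion this yields $\operatorname{tr}(\alpha\beta)-\operatorname{tr}\alpha-\operatorname{tr}\beta+1\geq\mu(\operatorname{supp}\alpha\cap\operatorname{supp}\beta)\geq 0$, so the trace identity forces disjointness. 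For the converse I would show that disjoint supports force $\operatorname{fix}(\alpha\beta)=\operatorname{fix}\alpha\cap\operatorname{fix}\beta$ up to null sets. Writing a fixed point of $\alpha\beta$ as $s(h)$ with $h\in\beta$ and $h^{-1}\in\alpha$, and letting $\hat\gamma$ denote the Borel automorphism of $G^{(0)}$ induced by $\gamma\in[G]$, one has $\hat\beta(s(h))=r(h)$ and $\hat\alpha(r(h))=s(h)$. If $s(h)\neq r(h)$ then $s(h)\in\operatorname{supp}\beta$, so by disjointness $s(h)\in\operatorname{fix}\alpha$ and $\hat\alpha(s(h))=s(h)$, contradicting injectivity of $\hat\alpha$ (which also maps $r(h)\neq s(h)$ to $s(h)$); hence $h$ is a loop, and a \emph{non-unit} loop would place $s(h)$ in $\operatorname{supp}\alpha\cap\operatorname{supp}\beta$. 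Thus off the null overlap every fixed point of $\alpha\beta$ lies in $\operatorname{fix}\alpha\cap\operatorname{fix}\beta$, whence $\operatorname{supp}(\alpha\beta)=\operatorname{supp}\alpha\sqcup\operatorname{supp}\beta$ and the trace identity holds.

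This lemma applies verbatim to $G$ and, since each $\mathfrak{S}_n=[Y_n^2]$ is the full group of a finite pmp groupoid, to every finite factor. Item (2) then follows by transport through $\theta$. If $\operatorname{supp}\alpha\cap\operatorname{supp}\beta=\varnothing$, the trace identity holds in $[G]$, passes to $\prod_{\mathcal{U}}\mathfrak{S}_n$ through the homomorphism $\theta$, and the finite-level inequality $\operatorname{tr}(\sigma_n\tau_n)-\operatorname{tr}\sigma_n-\operatorname{tr}\tau_n+1\geq\mu(\operatorname{supp}\sigma_n\cap\operatorname{supp}\tau_n)$ gives, in the limit along $\mathcal{U}$, that $\operatorname{supp}\theta(\alpha)\cap\operatorname{supp}\theta(\beta)=\varnothing$. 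Conversely, if these ultraproduct supports are disjoint I would choose representatives $\sigma_n,\tau_n$ of $\theta(\alpha),\theta(\beta)$ with \emph{genuinely} disjoint supports for $\mathcal{U}$-almost every $n$, altering $\tau_n$ only on the vanishing overlap (which leaves the class in $\prod_{\mathcal{U}}\mathfrak{S}_n$ unchanged), apply the finite-level lemma to obtain the trace identity $\mathcal{U}$-a.e., and pull it back through $\theta$ to conclude $\operatorname{supp}\alpha\cap\operatorname{supp}\beta=\varnothing$. Finally, item (1) reduces to item (2): since $\operatorname{fix}\beta=1\setminus\operatorname{supp}\beta$, one has $\operatorname{supp}\alpha=\operatorname{fix}\beta$ iff $\operatorname{supp}\alpha,\operatorname{supp}\beta$ are disjoint and $\mu(\operatorname{supp}\alpha)+\mu(\operatorname{supp}\beta)=1$; the first clause is item (2) and the second is the trace condition $d(\alpha,1)+d(\beta,1)=1$, preserved by $\theta$, and the same equivalence holds for $\theta(\alpha),\theta(\beta)$.

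The main obstacle is the converse half of the support-calculus lemma in the genuinely groupoid setting: one is tempted to say disjoint supports make $\alpha$ and $\beta$ act on disjoint regions, but $\operatorname{fix}$ records only true units rather than all points fixed by the induced transformation, so in the presence of isotropy one must argue with the induced automorphisms $\hat\alpha,\hat\beta$ and their injectivity to rule out spurious cancellation. The secondary technical point is the representative-modification step in the ultraproduct, which is needed precisely because approximate disjointness does not on its own control the cancellation term $\mu(\operatorname{supp}\alpha\cup\operatorname{supp}\beta)-\mu(\operatorname{supp}(\alpha\beta))$; replacing the supports by exactly disjoint ones at each finite level sidesteps this difficulty.
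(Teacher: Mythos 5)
Your proof is correct, but it takes a genuinely different route from the paper's, which is a two-line affair: the paper characterizes each condition by quantities that an isometric embedding preserves essentially tautologically, namely $\operatorname{supp}\alpha\cap\operatorname{supp}\beta=\varnothing$ iff $d(\alpha,\beta)=d(1,\alpha)+d(1,\beta)$ (equality in the triangle inequality), and $\operatorname{supp}\alpha=\operatorname{fix}\beta$ iff $d(\alpha,\beta)=1$ and $\operatorname{tr}(\alpha)+\operatorname{tr}(\beta)=1$, and then remarks that the same characterizations hold in ultraproducts. You instead encode disjointness multiplicatively, through $\operatorname{tr}(\alpha\beta)=\operatorname{tr}(\alpha)+\operatorname{tr}(\beta)-1$; this forces you to use that $\theta$ is a homomorphism (the paper's item (2) needs only the isometry and $\theta(1)=1$) and to prove the harder implication that disjoint supports give $\operatorname{fix}(\alpha\beta)=\operatorname{fix}\alpha\cap\operatorname{fix}\beta$ up to null sets, where your isotropy-aware injectivity argument is exactly what the groupoid setting requires (a naive ``disjoint regions'' argument would indeed miss the non-unit loops, as you note). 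What your route buys is an explicit treatment of the ultraproduct transfer, which the paper leaves to the reader. Two caveats. First, the representative-modification step is misstated: one cannot alter a permutation \emph{only} on the overlap $S=\operatorname{supp}\sigma_n\cap\operatorname{supp}\tau_n$ and remain a bijection; one must also repair it on (roughly) $\tau_n^{-1}(S)$, which is still of vanishing measure, so the argument survives after this routine fix. Second, you can avoid that step entirely: your own case analysis shows, with no disjointness hypothesis, that $\operatorname{fix}(\alpha\beta)\subseteq(\operatorname{fix}\alpha\cap\operatorname{fix}\beta)\cup(\operatorname{supp}\alpha\cap\operatorname{supp}\beta)$, whence $\mu(\operatorname{supp}\alpha\cap\operatorname{supp}\beta)\le\operatorname{tr}(\alpha\beta)-\operatorname{tr}(\alpha)-\operatorname{tr}(\beta)+1\le 2\mu(\operatorname{supp}\alpha\cap\operatorname{supp}\beta)$; this two-sided bound makes the trace identity equivalent to disjointness robustly, at every finite level and hence in the ultraproduct, with no change of representatives.
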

\begin{proof}
\begin{enumerate}
\item Simply note that $\operatorname{supp}\alpha=\operatorname{fix}\beta$ is equivalent to
\[d_\mu(\alpha,\beta)=1\qquad\text{and}\qquad\operatorname{tr}(\alpha)+\operatorname{tr}(\beta)=1,\]
and the same condition applies to ultraproducts.
\item We have $\operatorname{supp}\alpha\cap\operatorname{supp}\beta=\varnothing$ if and only if $d_\mu(\alpha,\beta)=d_\mu(1,\alpha)+d_\mu(1,\beta)$, and this condition is preserved by $\theta$.\qedhere
\end{enumerate}
\end{proof}

We will say that a groupoid $G$ is \emph{aperiodic} if $|s^{-1}(x)|=\infty$ for all $x\in G^{(0)}$.

\begin{lemma}\label{lemmaaperiodicsupport}
Suppose $(G,\mu)$ is an aperiodic groupoid. Then for all $A\in\operatorname{MAlg}(G^{(0)})$, there exists $\alpha\in[G]$ such that $\operatorname{supp}\alpha=A$.
\end{lemma}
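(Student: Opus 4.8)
The plan is to reduce the statement to the construction of a single fixed-point-free element living over $A$, and then to build that element by a recurrence-plus-exhaustion argument. Since any $\alpha\in[G]$ has $s(\alpha)=G^{(0)}$, the requirement $\operatorname{supp}\alpha=A$ is equivalent to $\operatorname{fix}\alpha=G^{(0)}\setminus A$; that is, $\alpha$ should contain the units over $A^c$ and no unit over $A$. So first I would seek a bisection $\beta\in[[G]]$ with $s(\beta)=r(\beta)=A$ and $\operatorname{fix}\beta=\varnothing$ (up to null sets), and then set $\alpha=(G^{(0)}\setminus A)\cup\beta$. Because $s(\beta)=r(\beta)=A$ is disjoint from $A^c$ and the source and range maps are injective on each of the two pieces, $\alpha$ is a well-defined element of $[G]$; and by construction $\operatorname{fix}\alpha=A^c$, hence $\operatorname{supp}\alpha=A$.

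To produce $\beta$ I would first isolate a recurrence property: for every positive-measure $B\subseteq G^{(0)}$, almost every $x\in B$ admits a non-unit $g\in G$ with $s(g)=x$ and $r(g)\in B$. Equivalently, the restricted groupoid $G|_B=\{g\in G:s(g),r(g)\in B\}$ is again aperiodic. This is the groupoid analogue of Poincar\'e recurrence, and it is the one place where aperiodicity and measure preservation are used in an essential way. I would prove it by covering the non-unit part $G\setminus G^{(0)}$ by countably many disjoint bisections via Lusin--Novikov, and showing that the set of points of $B$ whose orbit leaves $B$ permanently would have to be compressed by the measure-preserving partial maps furnished by those bisections, contradicting $\mu(s(\cdot))=\mu(r(\cdot))$. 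Isotropy only helps here: a non-unit $g$ with $s(g)=r(g)=x\in B$ already witnesses the conclusion.

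With recurrence available, I would construct $\beta$ by exhaustion. Consider all bisections $\gamma$ with $s(\gamma)=r(\gamma)\subseteq A$ and $\operatorname{fix}\gamma=\varnothing$, and take one maximizing $\mu(s(\gamma))$ (the supremum is attained up to null sets by the usual increasing-union argument, using the closed-below structure of $[[G]]_B$). If $B:=A\setminus s(\gamma)$ were non-null, then recurrence applied to $B$ together with a Lusin--Novikov selection yields a Borel non-unit section $\sigma$ over $B$ with $r(\sigma(x))\in B$. On the part where $\sigma$ takes isotropy values, $\{\sigma(x)\}$ is already a fixed-point-free bisection with equal source and range inside $B$; on the remaining part one decomposes $\sigma$ into bisections, selects a non-null sub-bisection $\delta_0$, arranges $s(\delta_0)\cap r(\delta_0)=\varnothing$ by passing to an independent set for the induced degree-$\le 2$ Borel graph, and replaces $\delta_0$ by $\delta:=\delta_0\cup\delta_0^{-1}$. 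In either case one obtains a non-null fixed-point-free bisection $\delta$ with $s(\delta)=r(\delta)\subseteq B$, so $\gamma\cup\delta$ contradicts maximality. Hence $s(\beta)=r(\beta)=A$ up to null sets, as required.

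The hard part will be the recurrence step; the reduction in the first paragraph and the exhaustion in the third are routine bookkeeping with sources, ranges and the inverse-monoid structure once one knows that the restriction of an aperiodic pmp groupoid to any positive-measure set of units is again aperiodic. I expect the assembly of $\alpha$ and the verification that $\operatorname{fix}\alpha=A^c$ to require nothing beyond the disjointness of the two pieces and the fact that $G$ is pmp.
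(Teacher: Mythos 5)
Your proof is correct, but it takes a genuinely different and more self-contained route than the paper. The paper splits $G$ along the two possible sources of aperiodicity: it decomposes $G=H+K$ into subgroupoids where every unit of $H$ has infinite orbit ($|r(s^{-1}(x))|=\infty$) and every unit of $K$ has infinite isotropy group $K^x_x$ (aperiodicity forces every unit into one of these two invariant cases). On $H$ the statement is then \emph{cited}: the orbit equivalence relation $R(H)=(r,s)(H)$ is aperiodic in the classical sense, the known lemma for aperiodic pmp equivalence relations gives $f\in[[R(H)]]$ with $\operatorname{supp}(f)=A$, and $f$ is lifted to $[[H]]$ through a Borel section; on $K$ an element of $[K]$ consisting of non-unit isotropy elements over $A$ is built directly from a countable cover of $K$ by bisections. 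You instead re-prove the equivalence-relation lemma from scratch, adapted to groupoids: the reduction to a fixed-point-free bisection $\beta$ with $s(\beta)=r(\beta)=A$, a Poincar\'e-type recurrence lemma, and a greedy exhaustion in which isotropy is handled pointwise (``isotropy only helps'') and the non-isotropy part is symmetrized by passing to an independent set of the degree-$\leq 2$ graph of $x\mapsto r(\sigma(x))$ and taking $\delta_0\cup\delta_0^{-1}$. All of your steps are sound: the recurrence claim follows from the compression argument you sketch (the bad set is a positive-measure partial transversal of an aperiodic subrelation, which is impossible under a finite invariant measure), and the independent-set step is the standard Borel $3$-coloring of bounded-degree Borel graphs; note also that ``maximizing $\mu(s(\gamma))$'' must be implemented as the usual greedy exhaustion rather than a union of an arbitrary maximizing sequence, since distinct bisections need not be compatible---this is the routine point you allude to. In summary, the paper's route buys brevity and modularity (the isotropy case becomes trivial, the hard case is outsourced to the literature), while yours buys a uniform, citation-free argument at the cost of carrying out the recurrence and coloring ingredients explicitly.
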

\begin{proof}
We can decompose $G$ as $G=H+K$, where $H$ and $K$ are subgroupoids, $|r(s^{-1}(x))|=\infty$ for all $x\in H^{(0)}$ and for every $x\in K^{(0)}$, $K^x_x=s^{-1}(x)\cap r^{-1}(x)$ is infinite.

The equivalence relation $R(H)=(r,s)(H)$ on $H^{(0)}$ is aperiodic, in the usual sense, so if $A\subseteq H^{(0)}$, \cite[Lemma 4.10]{MR2583950} allows us to take $f\in[[R(H)]]$ with $\operatorname{supp}(f)=A$, and $f$ lifts to an element of $[[H]]$.

If $A\subseteq K^{(0)}$, we use the fact that $K$ is covered by countably many elements of $[[K]]$, and it is easy to construct $\alpha\in[K]$ with $s(g)=r(g)$ for all $g\in\alpha$ and $\alpha\cap K^{(0)}=K^{(0)}\setminus A$.\qedhere
\end{proof}

\begin{lemma}\label{lemmaextensionfullsemigrouptofullgroup}
If $\gamma\in[[G]]$, then there exists $\widetilde{\gamma}\in[G]$ with $\gamma\subseteq\widetilde{\gamma}$.
\end{lemma}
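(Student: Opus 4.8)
The plan is to complete $\gamma$ by attaching a bisection that covers the parts of the unit space missed by the source and range of $\gamma$. Write $A=s(\gamma)$ and $B=r(\gamma)$; since $G$ is pmp we have $\mu(A)=\mu(B)$, hence $\mu(G^{(0)}\setminus A)=\mu(G^{(0)}\setminus B)$. It therefore suffices to produce $\delta\in[[G]]$ with $s(\delta)=G^{(0)}\setminus A$ and $r(\delta)=G^{(0)}\setminus B$ (up to null sets): then $\widetilde{\gamma}=\gamma\cup\delta$ is a bisection containing $\gamma$ with full source and range, so it represents an element of $[G]$.

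To build $\delta$ I would run a measure-theoretic exhaustion. Set $C=G^{(0)}\setminus A$ and $D=G^{(0)}\setminus B$, and call a bisection $\eta\in[[G]]$ \emph{admissible} if $s(\eta)\subseteq C$ and $r(\eta)\subseteq D$. Two admissible bisections can be amalgamated (restrict the second to the fibers left unused by the first), so choosing admissible $\eta_n$ whose source measures approach $m:=\sup\{\mu(s(\eta)):\eta\text{ admissible}\}$ and taking an increasing union yields an admissible $\delta$ with $\mu(s(\delta))=m$. I must then show $m=\mu(C)$, i.e.\ that the leftover sets $C'=C\setminus s(\delta)$ and $D'=D\setminus r(\delta)$ are null.

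The key step, which I expect to be the main obstacle, is ruling out a positive-measure ``stuck'' region $C',D'$. By maximality of $m$ there is no positive-measure admissible bisection from $C'$ into $D'$; covering $s^{-1}(C')\cap r^{-1}(D')$ by countably many bisections (Lusin-Novikov) and using $\mu(s(\cdot))=\mu(r(\cdot))$ shows that the saturation $[C']:=r(s^{-1}(C'))$ meets $D'$ in a null set. Now I exploit the pmp hypothesis on \emph{saturated} sets: for any $G$-invariant Borel $S$ one has $s^{-1}(S)=r^{-1}(S)$, so restricting any bisection $\alpha$ to this set gives $\mu(s(\alpha)\cap S)=\mu(r(\alpha)\cap S)$. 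Applying this to $\alpha=\gamma\cup\delta$ with $S=[C']$ equates the measure of $(A\cup s(\delta))\cap[C']$ with that of $(B\cup r(\delta))\cap[C']$; since $A\cup s(\delta)$ and $C'$ partition $G^{(0)}$, and likewise $B\cup r(\delta)$ and $D'$, a short bookkeeping gives $\mu(C')=\mu(D'\cap[C'])=0$. Thus $\delta$ has source $C$ and range $D$ up to null, completing the construction.

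It remains to justify the bookkeeping input: applying the saturated-set identity to $\gamma$ itself gives $\mu(A\cap S)=\mu(B\cap S)$ for every saturated $S$, whence $\mu(C\cap S)=\mu(D\cap S)$, and this is precisely what makes the two partitions above have matching pieces inside $[C']$. The only remaining delicacy is that $s(\delta)$ and $r(\delta)$ equal $C$ and $D$ only modulo null sets, so $\widetilde{\gamma}=\gamma\cup\delta$ has co-null rather than literally full source and range; since $[G]$ is a measured object this is harmless, as one may adjust $\widetilde{\gamma}$ on a null set without changing its class. I expect the saturated-set measure identity to be the crux, with the exhaustion and the final null-set adjustment being routine.
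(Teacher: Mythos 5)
Your overall strategy is sound and genuinely different from the paper's: you complement $\gamma$ by a maximal bisection $\delta$ from $C=G^{(0)}\setminus s(\gamma)$ to $D=G^{(0)}\setminus r(\gamma)$, and then use invariance of the saturation $[C']$ together with the identity $\mu(s(\alpha)\cap S)=\mu(r(\alpha)\cap S)$ for invariant $S$ (valid because $\alpha\cap s^{-1}(S)=\alpha\cap r^{-1}(S)$ is again a bisection, to which the pmp condition applies); that part, which you rightly call the crux, is correct. The gap is in the step that produces $\delta$. Amalgamation keeps only the part of $\eta_{n+1}$ whose source avoids $s(\delta_n)$ \emph{and} whose range avoids $r(\delta_n)$, so an increasing union built from a sequence with $\mu(s(\eta_n))\to m$ need not have measure $m$. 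Concretely, let $R$ be an ergodic aperiodic pmp equivalence relation on $[0,1]$ and $\gamma=\varnothing$, so $C=D=[0,1]$ and $m=1$; take $\eta_1$ mapping $[0,1/2]$ onto $[1/2,1]$ and, for $n\ge 3$, $\eta_n$ mapping $[0,1/2]$ onto $[1/n,1/2+1/n]$ and $(1/2,1-1/n]$ onto $(1/2+1/n,1]$. Then $\mu(s(\eta_n))=1-1/n\to 1$, yet every amalgam of $\eta_1$ with $\eta_n$ equals $\eta_1$, because the part of $\eta_n$ unused by $\eta_1$ has range inside $[1/2,1]=r(\eta_1)$. The procedure stalls at $\mu(s(\delta))=1/2$, and for this $\delta$ your maximality claim fails: $C'=(1/2,1]$ and $D'=[0,1/2)$ are joined by an admissible bisection of measure $1/2$. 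On the charitable reading that the $\eta_n$ are chosen adaptively so that the increasing sequence itself has measures tending to $m$, the assertion that such a choice is possible is essentially the statement being proved, so it cannot be assumed.

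The repair is standard and local: build $\delta$ by greedy exhaustion relative to the leftovers. At stage $n$, let $m_n$ be the supremum of $\mu(s(\beta))$ over admissible $\beta$ with $s(\beta)\subseteq C\setminus s(\delta_n)$ and $r(\beta)\subseteq D\setminus r(\delta_n)$; adjoin some such $\beta_n$ with $\mu(s(\beta_n))\ge m_n/2$ and set $\delta_{n+1}=\delta_n\cup\beta_n$. The sources of the $\beta_n$ are pairwise disjoint, so $\sum_n\mu(s(\beta_n))\le 1$, hence $m_n\to 0$; any admissible bisection from $C'$ into $D'$ remains admissible at every stage, so its source has measure at most $m_n$ for all $n$ and is therefore null. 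This is exactly the maximality input your saturation argument needs, and with it the rest of your proof goes through (including the harmless null-set adjustment at the end, which the paper also makes implicitly). For comparison, the paper avoids exhaustion altogether: it extends $\gamma$ by the first-return construction $\gamma_n=\left\{g\in\gamma^{-n}:s(g)\notin s(\gamma),\ r(g)\notin r(\gamma)\right\}$, whose sources and ranges are pairwise disjoint and conull in $s(\gamma)\cup r(\gamma)$ by a Poincar\'e-recurrence argument. Your route is longer but uses only the generic measure-exhaustion technique and the pmp condition on invariant sets, rather than the dynamics of $\gamma$ itself.
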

\begin{proof}[Sketch of proof]
Let $\gamma_0=\gamma$, and for all $n\geq 1$, set
\[\gamma_n=\left\{g\in\gamma^{-n}:s(g)\not\in s(\gamma), r(g)\not\in r(\gamma)\right\}\]
One shows that $s(\gamma_n)\cap s(\gamma_m)=r(\gamma_n)\cap r(\gamma_m)=\varnothing$ for $n\neq m$, and that $\bigcup_n s(\gamma_n)$ and $\bigcup_n r(\gamma_n)$ are both contained and conull in $s(\gamma)\cup r(\gamma)$. Therefore $\widetilde{\gamma}=\bigcup_n\gamma_n\cup(G^{(0)}\setminus(s(\gamma)\cup r(\gamma))$ has the desired properties.\qedhere
\end{proof}

If $\Gamma_1$ and $\Gamma_2$ are groups acting on sets $X_1$ and $X_2$, respectively, $\theta:\Gamma_1\to\Gamma_2$ is a homomorphism and $\phi:X_1\to X_2$ is a function, we say that the pair $(\theta,\phi)$ is \emph{covariant} if $\phi(\gamma x)=\theta(\gamma)\phi(x)$ for all $\gamma\in\Gamma_1$ and $x\in X_1$.

Given $\alpha\in[G]$ and $A\in\operatorname{MAlg}(G^{(0)})$, define $\alpha\cdot A=r(s|_\alpha^{-1}(A))$. This defines an (isometric, order-preserving) action of $[G]$ on $\operatorname{MAlg}(G^{(0)})$. This action also extends to ultraproducts of full groups and measure algebras.

\begin{theorem}\label{theoremfirstimportant}
An aperiodic pmp groupoid $G$ is sofic if and only if the full group $[G]$ is metrically sofic. More precisely, every isometric embedding of $[G]$ into an ultraproduct $\prod_\mathcal{U}\mathfrak{S}_n$ extends uniquely to an isometric embedding of $[[G]]$.
\end{theorem}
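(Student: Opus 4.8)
The plan is to prove the substantive implication---that metric soficity of $[G]$ forces $G$ to be sofic---by extending a given isometric embedding $\theta:[G]\to\prod_{\mathcal{U}}\mathfrak{S}_n$ to all of $[[G]]$ in two stages: first to the idempotent semilattice $\operatorname{MAlg}(G^{(0)})$, and then to an arbitrary $\gamma\in[[G]]$ via the factorization $\gamma=\widetilde{\gamma}\,s(\gamma)$ provided by Lemma \ref{lemmaextensionfullsemigrouptofullgroup}. The reverse implication is immediate: given an isometric embedding $\Phi:[[G]]\to\prod_{\mathcal{U}}[[G_k]]$, for $\alpha\in[G]$ we have $s(\alpha)=1$, so $\Phi(\alpha)$ has full source; composing with the finite-model embeddings of Proposition \ref{propositionfinitegroupoids} and invoking Theorem \ref{theoremlinfinityc0} to normalize the sequence exhibits $[G]$ as metrically sofic.

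For the first stage I would exploit aperiodicity. By Lemma \ref{lemmaaperiodicsupport} every $A\in\operatorname{MAlg}(G^{(0)})$ has the form $A=\operatorname{supp}\alpha$ for some $\alpha\in[G]$, so I set $\widetilde{\theta}(A)=\operatorname{supp}\theta(\alpha)$. The crucial point is well-definedness, which I would deduce from an order characterization: for $\alpha,\beta\in[G]$ one has $\operatorname{supp}\alpha\subseteq\operatorname{supp}\beta$ if and only if $\operatorname{supp}\theta\alpha\subseteq\operatorname{supp}\theta\beta$. This follows by rewriting $\operatorname{supp}\alpha\subseteq\operatorname{supp}\beta$ as disjointness of $\operatorname{supp}\alpha$ from $\operatorname{fix}\beta=G^{(0)}\setminus\operatorname{supp}\beta$, realizing $\operatorname{fix}\beta$ as a support via Lemma \ref{lemmaaperiodicsupport}, and combining parts \eqref{lemmasupportandfix} and \eqref{lemmadisjointsupports} of Lemma \ref{lemmasupports} (using that $\theta\beta$ is a full permutation, so $\operatorname{fix}\theta\beta$ is the genuine complement of $\operatorname{supp}\theta\beta$). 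The resulting $\widetilde{\theta}$ is an injective, order- and measure-preserving map; since it preserves disjointness and measure it sends complements to complements, hence is a Boolean embedding $\operatorname{MAlg}(G^{(0)})\to\prod_{\mathcal{U}}\operatorname{MAlg}(n)$. Finally, because support transforms under conjugation as $\operatorname{supp}(\alpha\beta\alpha^{-1})=\alpha\cdot\operatorname{supp}\beta$, the pair $(\theta,\widetilde{\theta})$ is covariant for the action of $[G]$ on $\operatorname{MAlg}(G^{(0)})$.

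For the second stage, for $\gamma\in[[G]]$ with a chosen extension $\widetilde{\gamma}\in[G]$, I define
\[\Theta(\gamma)=\theta(\widetilde{\gamma})\,\widetilde{\theta}(s(\gamma)).\]
Well-definedness holds because if $\widetilde{\gamma},\widetilde{\gamma}'$ both extend $\gamma$ then $\widetilde{\gamma}'^{-1}\widetilde{\gamma}$ fixes $s(\gamma)$, so $\operatorname{supp}(\widetilde{\gamma}'^{-1}\widetilde{\gamma})\subseteq G^{(0)}\setminus s(\gamma)$; applying $\theta$ and the support-order characterization shows $\theta(\widetilde{\gamma})$ and $\theta(\widetilde{\gamma}')$ agree after restriction by $\widetilde{\theta}(s(\gamma))$. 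That $\Theta$ restricts to $\theta$ on $[G]$ and to $\widetilde{\theta}$ on idempotents is clear, and trace-preservation is a short computation, $\operatorname{tr}\Theta(\gamma)=\mu\bigl(\widetilde{\theta}(s\gamma)\cap\operatorname{fix}\theta\widetilde{\gamma}\bigr)=\mu\bigl(\widetilde{\theta}(s\gamma\cap\operatorname{fix}\widetilde{\gamma})\bigr)=\mu(\operatorname{fix}\gamma)=\operatorname{tr}\gamma$, using $\operatorname{fix}\theta\widetilde{\gamma}=\widetilde{\theta}(\operatorname{fix}\widetilde{\gamma})$ and the measure-preservation of $\widetilde{\theta}$. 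By Proposition \ref{theoremtraceanddistance} a trace-preserving map on this sub-inverse-monoid is an isometric embedding. Uniqueness follows because $[G]$ and $\operatorname{MAlg}(G^{(0)})$ generate $[[G]]$ and any isometric homomorphic extension of $\theta$ is forced on idempotents, since $\operatorname{supp}\alpha$ is metrically determined from $\alpha\in[G]$.

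I expect the homomorphism property $\Theta(\gamma\delta)=\Theta(\gamma)\Theta(\delta)$ to be the main obstacle, as this is where the two stages must interact. Concretely, I would use covariance to slide the idempotent past the group element, $\widetilde{\theta}(s\gamma)\,\theta(\widetilde{\delta})=\theta(\widetilde{\delta})\,\widetilde{\theta}(\widetilde{\delta}^{-1}\cdot s\gamma)$, and then the inverse-semigroup identity $s(\gamma\delta)=(\widetilde{\delta}^{-1}\cdot s\gamma)\cap s(\delta)$ together with the fact that $\widetilde{\gamma}\widetilde{\delta}$ extends $\gamma\delta$. Matching these requires that $\widetilde{\theta}$ be a genuine homomorphism of semilattices and that covariance hold exactly, which is precisely what the first stage secures.
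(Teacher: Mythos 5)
Your proposal is correct and follows essentially the same route as the paper's proof: extend $\theta$ to $\operatorname{MAlg}(G^{(0)})$ via supports (using Lemmas \ref{lemmaaperiodicsupport} and \ref{lemmasupports}), establish covariance of the pair, then define $\Theta(\gamma)=\theta(\widetilde{\gamma})\,\widetilde{\theta}(s(\gamma))$ via Lemma \ref{lemmaextensionfullsemigrouptofullgroup}, with the same multiplicativity formula, trace computation, and uniqueness argument. The only cosmetic difference is in verifying that $\widetilde{\theta}$ preserves intersections: you package it as an order/disjointness/measure argument yielding a Boolean embedding, while the paper deduces it directly from products of full-group elements with disjoint supports; both rest on the same two parts of Lemma \ref{lemmasupports}.
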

\begin{proof}
Let's deal with uniqueness first: If $\theta:[[G]]\to\prod_\mathcal{U}\mathfrak{S}_n$ is an isometric embedding, then for every $\alpha\in [[G]]$ choose, by Lemmas \ref{lemmaaperiodicsupport} and \ref{lemmaextensionfullsemigrouptofullgroup}, $\widetilde{\alpha},\beta\in [G]$ with $\operatorname{supp}\beta=s(\alpha)$ and $\alpha\subseteq\widetilde{\alpha}$. Then $\theta(\alpha)=\theta(\widetilde{\alpha})\operatorname{supp}\theta(\beta)$, so $\theta$ is uniquely determined by its restriction to $[G]$.

Now suppose $\theta:[G]\to\prod_\mathcal{U}\mathfrak{S}_n$ is an isometric embedding, and let's use the ideas above to extend it to $[[G]]$.

Given $A\in\operatorname{MAlg}(G^{(0)})$, choose $\alpha\in[G]$ with $\operatorname{supp}(\alpha)=A$ and define $\phi(A)=\operatorname{supp}(\theta(\alpha))$. We need several steps to finish this proof, namely,

\begin{enumerate}
\item\label{itemwelldefined} $\phi$ is well-defined, i.e., $\phi(A)$ does not depend on the choice of $\alpha$ with $\operatorname{supp}(\alpha)=A$:

Suppose $\alpha,\alpha'\in[G]$ satisfy $\operatorname{supp}\alpha=\operatorname{supp}\alpha'=A$. Consider any $\beta\in[G]$ with $\operatorname{supp}\beta=G^{(0)}\setminus A$. By Lemma \ref{lemmasupports}.\ref{lemmasupportandfix}, $\operatorname{supp}(\theta(\alpha))=\operatorname{fix}(\theta(\beta))=\operatorname{supp}(\theta(\alpha))$.
%
%
%
\item $\phi$ preserves intersections:

Let $A,B\in\operatorname{MAlg}(G^{(0)})$, and consider $\alpha,\beta,\gamma\in[R]$ with $\operatorname{supp}(\alpha)=A\cap B$, $\operatorname{supp}(\beta)=A\setminus B$ and $\operatorname{supp}(\gamma)=B\setminus A$. By Lemma \ref{lemmasupports}.\ref{lemmadisjointsupports}, the supports of $\theta(\alpha)$ and $\theta(\beta)$ are disjoint, so $\operatorname{supp}(\theta(\alpha\beta))=\operatorname{supp}(\theta(\alpha)\theta(\beta))=\operatorname{supp}(\theta(\alpha))\cup\operatorname{supp}(\theta(\beta))$, and similarly for $\alpha$ and $\gamma$. Also, $\operatorname{supp}(\alpha\beta)=A$ and $\operatorname{supp}(\alpha\gamma)=B$, so again by Lemma \ref{lemmasupports}.\ref{lemmadisjointsupports},
\begin{align*}
\phi(A)\cap\phi(B)&=\operatorname{supp}(\theta(\alpha\beta))\cap\operatorname{supp}(\theta(\alpha\gamma))\\
&=(\operatorname{supp}(\theta(\alpha))\cup\operatorname{supp}(\theta(\beta)))\cap(\operatorname{supp}(\theta(\alpha))\cup\operatorname{supp}(\theta(\gamma)))\\
&=\operatorname{supp}(\theta(\alpha))=\phi(A\cap B).
\end{align*}
\item $\phi$ preserves measure:

By Proposition \ref{theoremtraceanddistance}, $\theta$ is trace-preserving, so $\phi$ preserves measure.
\item If $\alpha\in[G]$, then $\phi(\operatorname{fix}\alpha)=\operatorname{fix}(\theta(\alpha))$:

We need just to verify that $\phi$ preserves complements. Given $A\in\operatorname{MAlg}(G^{(0)})$, the complement $B=G^{(0)}\setminus A$ is the unique element disjoint with $A$ and such that $\mu(A)+\mu(B)=1$, and all of this is preserved by $\phi$.

\item $(\theta,\phi)$ is covariant:

Let $A\in\operatorname{MAlg}(G^{(0)})$ and $\alpha\in[G]$. Take $\beta\in[G]$ with $\operatorname{supp}\beta=A$. Then $\operatorname{supp}(\alpha\beta\alpha^{-1})=\alpha\cdot A$, and
\begin{align*}
\phi(\alpha\cdot A)&=\operatorname{supp}(\theta(\alpha\beta\alpha^{-1}))=\operatorname{supp}(\theta(\alpha)\theta(\beta)\theta(\alpha)^{-1})=\theta(\alpha)\cdot\operatorname{supp}(\theta(\beta))\\
&=\theta(\alpha)\cdot\phi(A).
\end{align*}
\end{enumerate}

For every $\alpha\in[[G]]$, set $\Phi(\alpha)=\theta(\alpha')\phi(s(\alpha))$, where $\alpha'\in[G]$ is such that $\alpha\subseteq\alpha'$ (which exists by Lemma \ref{lemmaextensionfullsemigrouptofullgroup}). Using the definition of $\phi$ and the fact that it preserves the order, it is not hard to see that $\Phi$ is also well-defined. Note that $\Phi$ extends both $\theta$ and $\phi$.

Let's show that $\Phi$ is a sofic embedding. Suppose $\alpha=\alpha'A$, $\beta=\beta'B$, where $\alpha,\beta\in[[G]]$, $\alpha',\beta'\in[G]$ and $A,B\in\operatorname{MAlg}(G^{(0)})$. Then
\[\alpha\beta=\alpha'\beta'(B\cap\beta^{-1}\cdot A)\]
Since the same kind of formula holds on ultraproducts and $(\theta,\phi)$ is a covariant pair of morphisms, we obtain $\Phi(\alpha\beta)=\Phi(\alpha)\Phi(\beta)$.

It remains only to see that $\Phi$ is trace-preserving. Let $\alpha\in[[G]]$. If we show that $\operatorname{fix}\Phi(\alpha)=\phi(\operatorname{fix}\alpha)$ we are done because $\phi$ is isometric.

Let $\alpha'\in[G]$ with $\alpha\subseteq\alpha'$. Let $A=\operatorname{fix}\alpha$ and $B=\operatorname{fix}\alpha'\setminus A$. Note that $A=\operatorname{fix}\alpha'\cap s(\alpha)$, so \[\phi(A)=\operatorname{fix}\theta(\alpha')\cap s(\Phi(\alpha)).\]
Since $\Phi(\alpha)=\theta(\alpha')\phi(s(\alpha))\subseteq\theta(\alpha')$, we have $\operatorname{fix}(\Phi(\alpha))=\operatorname{fix}(\theta(\alpha'))\cap s(\Phi(\alpha))$. Thus we are done.
\qedhere
\end{proof}

Now we extend this result to when $G$ has periodic points, but no singleton classes. Set $\operatorname{Per}_{\geq 2}(G)=\left\{x\in G^{(0)}:2\leq|s^{-1}(x)|<\infty\right\}$.

\begin{lemma}
There exists $\alpha\in [G]$ with $\operatorname{supp}\alpha=\operatorname{Per}_{\geq 2}(G)$.
\end{lemma}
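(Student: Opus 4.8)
The plan is to build $\alpha$ piece by piece over a partition of $\operatorname{Per}_{\ge 2}(G)$ according to fibre size. For $n \ge 2$ set $P_n = \{x \in G^{(0)} : |s^{-1}(x)| = n\}$ and $G_n = \{g \in G : |s^{-1}(s(g))| = n\}$, so that $\operatorname{Per}_{\ge 2}(G) = \bigsqcup_{n\ge 2} P_n$. Each $P_n$ is Borel and, because $|s^{-1}(x)| = |s^{-1}(y)|$ whenever $x,y$ lie in the same $G$-orbit (the map $k\mapsto kg$ identifies the fibres along any arrow $g$), it is saturated; hence $G_n$ is a subgroupoid with $G_n^{(0)} = P_n$ and all $s$-fibres of size exactly $n$. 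It therefore suffices to produce, for each $n \ge 2$, a Borel bisection $\alpha_n$ with $s(\alpha_n) = r(\alpha_n) = P_n$ and $\alpha_n \cap G^{(0)} = \varnothing$ (i.e.\ fixed-point-free, so that $\operatorname{supp}\alpha_n = P_n$); then $\alpha = \bigcup_{n \ge 2}\alpha_n \cup (G^{(0)} \setminus \operatorname{Per}_{\ge 2}(G))$ is the desired element, since the defining sets are disjoint and saturated, giving $\alpha \in [G]$ with $\operatorname{fix}\alpha = G^{(0)}\setminus\operatorname{Per}_{\ge 2}(G)$ and hence $\operatorname{supp}\alpha = \operatorname{Per}_{\ge 2}(G)$.

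To construct $\alpha_n$, I would split $P_n$ according to orbit size. Let $O_1 \subseteq P_n$ be the (Borel, saturated) set of points whose $G_n$-orbit is a singleton, and $O_{\ge 2} = P_n \setminus O_1$. On $O_1$ every arrow over $x$ is an isotropy element, so $s^{-1}(x) = G_x^x$ has order $n \ge 2$ and contains at least one non-unit; by Lusin-Novikov I can choose a Borel section $x \mapsto g_x$ of the finite-fibre set $\{g \in G_n : s(g) = r(g) \notin G^{(0)}\}$ over $O_1$. Since $r(g_x) = s(g_x) = x$, this section is automatically a bisection fixing no unit, and contributes the $O_1$-part of $\alpha_n$. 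On $O_{\ge 2}$ the orbit equivalence relation $R_n = (r,s)(G_n)$ has all classes finite of size $\ge 2$; fixing a Borel linear order on the standard Borel space $G^{(0)}$ and cyclically advancing within each class yields a fixed-point-free Borel automorphism $\rho$ of $O_{\ge 2}$ with $\operatorname{graph}(\rho) \subseteq R_n$. Lusin-Novikov applied to the finite-fibre Borel set $\{g \in G_n : r(g) = \rho(s(g))\}$ then gives a Borel section $x \mapsto g_x$; because $\rho(x) \ne x$ each $g_x$ is a non-unit, and because $\rho$ is a bijection the assignment is a bisection with full source and range over $O_{\ge 2}$. Taking $\alpha_n$ to be the union of the two parts gives the required fixed-point-free bisection of $P_n$.

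The main obstacle is entirely measure-theoretic rather than algebraic: every choice above (a non-unit in each fibre, a groupoid element realising each step of $\rho$, and the cyclic enumeration of the finite orbits) must be made Borel, and the justification rests on repeated use of the Lusin-Novikov uniformization theorem for Borel sets with finite (hence countable) sections, exactly as in the covering of $G$ by countably many elements of $[[G]]_B$. The algebraic content — that a connected finite groupoid $\Gamma \times Y^2$ with $|\Gamma|\,|Y| = n \ge 2$ always admits a fixed-point-free full-group element (a cyclic permutation of $Y$ when $|Y| \ge 2$, a non-trivial group element when $|Y| = 1$) — is immediate, and is precisely what the two cases $O_{\ge 2}$ and $O_1$ encode.
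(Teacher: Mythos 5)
Your proof is correct and follows essentially the same route as the paper's (which is only a two-line sketch citing a transversal for periodic relations plus the argument of the aperiodic-support lemma): you split $\operatorname{Per}_{\geq 2}(G)$ into the part with non-trivial finite orbits, handled by a cyclic permutation of classes obtained from a Borel linear order (equivalently, a transversal) and lifted via Lusin--Novikov, and the part with trivial orbits but non-trivial finite isotropy, handled by a Lusin--Novikov selection of non-unit isotropy elements. The only blemishes are cosmetic: the set you uniformize over $O_1$ should read $\left\{g\in G_n\setminus G^{(0)}: s(g)=r(g)\right\}$ rather than $\left\{g\in G_n: s(g)=r(g)\notin G^{(0)}\right\}$, and the preliminary stratification by fibre size $n$ is harmless but unnecessary, since only the dichotomy between singleton and non-singleton orbits is ever used.
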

\begin{proof}
This follows easily from the existence of a transversal for periodic relations (\cite[Theorem 12.16]{kechrisclassicaldescriptivesettheory}) and an argument similar to the proof of Lemma \ref{lemmaaperiodicsupport}.\qedhere
\end{proof}

\begin{theorem}
Suppose that for all $x\in G^{(0)}$, $|s^{-1}(x)|\geq 2$. Then $G$ is sofic if and only if $[G]$ is metrically sofic.
\end{theorem}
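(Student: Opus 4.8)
The plan is to reduce to the aperiodic case already settled in Theorem~\ref{theoremfirstimportant} by splitting off the periodic part, which is automatically sofic. Write $\operatorname{Aper}(G)=\left\{x\in G^{(0)}:|s^{-1}(x)|=\infty\right\}$; since fibre sizes are constant along classes (right translation by $g\colon x\to y$ is a bijection $s^{-1}(x)\to s^{-1}(y)$), both $\operatorname{Aper}(G)$ and $\operatorname{Per}_{\geq 2}(G)$ are invariant Borel sets, and under the present hypothesis they partition $G^{(0)}$. This gives non-null subgroupoids $G_a=G|_{\operatorname{Aper}(G)}$ (aperiodic) and $G_p=G|_{\operatorname{Per}_{\geq 2}(G)}$ (periodic) whenever the corresponding sets are non-null. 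By Corollary~\ref{periodicequivalencerelationsaresofic}, $G_p$ is sofic, and by the partition property (item 6 of Theorem~\ref{theorempermanence}) applied to $\left\{G_a,G_p\right\}$, $G$ is sofic if and only if $G_a$ is. If either piece is null the statement degenerates to Theorem~\ref{theoremfirstimportant} or to Corollary~\ref{periodicequivalencerelationsaresofic}, so I may assume both are non-null and set $c=\mu(\operatorname{Aper}(G))\in(0,1)$.

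The forward implication needs no hypothesis on fibres: if $G$ is sofic then $[[G]]$ embeds isometrically into some $\prod_{\mathcal{U}}[[n_k]]$ (using Proposition~\ref{propositionfinitegroupoids}, Theorem~\ref{theoremlinfinityc0} and the equivalence of the two definitions of soficity), and restricting this embedding to $[G]$ lands in the unit group of $\prod_{\mathcal{U}}[[n_k]]$, which is $\prod_{\mathcal{U}}\mathfrak{S}_{n_k}$, because isometric embeddings preserve $s(\cdot)=1$. Trading the sequence $(n_k)$ for the standard one via Theorem~\ref{theoremlinfinityc0} shows $[G]$ is metrically sofic.

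For the converse, fix an isometric embedding $\theta\colon[G]\to\prod_{\mathcal{U}}\mathfrak{S}_n$ and, using the preceding Lemma, choose $\pi\in[G]$ with $\operatorname{supp}\pi=\operatorname{Per}_{\geq 2}(G)$. Put $E=\operatorname{fix}\theta(\pi)$, an idempotent of trace $c$. For every $\alpha\in[G_a]$ we have $\operatorname{supp}\alpha\cap\operatorname{supp}\pi=\varnothing$, so Lemma~\ref{lemmasupports}.\ref{lemmadisjointsupports} gives $\operatorname{supp}\theta(\alpha)\subseteq E$; as $\theta(\alpha)$ has full support it fixes the complement of $E$ pointwise. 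Thus $\theta$ carries $[G_a]$ into the corner subgroup of elements fixing $E^c$, which, choosing representatives $E=(E_k)_{\mathcal{U}}$ with $|E_k|/n_k\to c$ and cleaning up the representing permutations on a set of measure vanishing along $\mathcal{U}$, is identified with $\prod_{\mathcal{U}}\mathfrak{S}_{E_k}$. Under this identification the normalized Hamming metric rescales by the factor $1/c$, which is exactly the factor relating $d_\mu$ to $d_{\mu_{G_a}}$ on $[G_a]$; hence $\theta$ induces an isometric embedding $[G_a]\to\prod_{\mathcal{U}}\mathfrak{S}_{|E_k|}$.

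Finally I would upgrade this to metric soficity in the required sense: the embedding places $[G_a]$ inside $\prod_{\mathcal{U}}\mathfrak{S}_{|E_k|}\subseteq\prod_{\mathcal{U}}[[\,|E_k|\,]]$, so Theorem~\ref{theoremlinfinityc0} yields an isometric embedding into $\prod_{\mathcal{U}}[[n]]$ for the standard sequence, whose image again lies in the units $\mathfrak{S}_n$ since elements of $[G_a]$ have full support; thus $[G_a]$ is metrically sofic. Theorem~\ref{theoremfirstimportant} then gives that $G_a$ is sofic, and the reduction of the first paragraph finishes the proof. The main obstacle is the corner-restriction step: verifying that $\theta([G_a])$ genuinely lands among permutations fixing $E^c$ and that passing to the sets $E_k$ rescales the metric by precisely $1/c$, so that the normalization $d_{\mu_{G_a}}=d_\mu/c$ is matched and the auxiliary sequence $(|E_k|)$ can be traded for the standard one.
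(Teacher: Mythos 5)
Your proposal is correct and takes essentially the same route as the paper's own proof: the same periodic/aperiodic decomposition (disposed of via Corollary \ref{periodicequivalencerelationsaresofic} and Theorem \ref{theorempermanence}), the same embedding of the aperiodic full group into $[G]$ by extending with the identity on $\operatorname{Per}_{\geq 2}(G)$, and the same restriction of $\theta$ to the corner $\operatorname{fix}\theta(\pi)$, with the two distance-scaling factors $\mu(\operatorname{Aper})^{\pm1}$ cancelling before Theorem \ref{theoremfirstimportant} is invoked. Your additional care about the forward direction, trading the sequence $(|E_k|)$ for the standard one via Theorem \ref{theoremlinfinityc0}, and cleaning up representatives (note only that elements of $[G_a]$ have full \emph{domain}, not full support) just makes explicit what the paper leaves implicit.
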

\begin{proof}
Let $P=\operatorname{Per}_{\geq 2}(G)$ and $\operatorname{Aper}=G^{(0)}\setminus P$, and consider the subgroupoid $H=G\operatorname{Aper}$ of $G$. By previous results, it suffices to show that $[H]$ is metrically sofic. Fix any $\rho\in[G]$ with $\operatorname{supp}\rho=P$. Let $\theta:[G]\to\prod_{\mathcal{U}}\mathfrak{S}_{n_k}$ be an isometric embedding.

Consider the embedding $[H]\to[G]$, $\alpha\mapsto \widetilde{\alpha}=\alpha\cup P$. This embedding modifies distances by a multiplicative factor of $\mu(\operatorname{Aper})$. By Lemma \ref{lemmasupports}.\ref{lemmadisjointsupports}, $\operatorname{supp}\theta(\widetilde{\alpha})\subseteq\operatorname{fix}\theta(\rho)$. We can then restrict $\theta(\alpha)$ to $\operatorname{fix}\theta(\rho)$ (similarly to how we did in Theorem \ref{theorempermanence}.\ref{theorempermanenceitemsubgroupoid}.) and obtain a new embedding $\eta:[H]\to\prod_{\mathcal{U}}\mathfrak{S}_{m_k}$ (where $m_k\leq n_k$). This new embedding modifies distances by a multiplicative factor of
\[d(\theta(\rho),1)^{-1}=d(\rho,1)^{-1}=\mu(\operatorname{Aper})^{-1}\]
so $\eta$ is in fact isometric.\qedhere
\end{proof}

\bibliographystyle{abbrv}
\bibliography{biblio}

\end{document}